\documentclass[12pt]{article}
\usepackage[margin=1in]{geometry}

\usepackage{amsmath}
\usepackage{amssymb}
\usepackage{amsthm}
\usepackage{verbatim}
\usepackage[pdftex]{hyperref}
\usepackage{graphicx}
\usepackage{wrapfig}
\usepackage{float}
\usepackage{tikz}
\usepackage{tikz-cd} 
\usepackage{quiver}
\usepackage[font=footnotesize]{caption}
\usepackage{subcaption}
\usepackage{hyperref}

\usepackage[T1]{fontenc}
\usepackage{etoolbox} 


\graphicspath{ {./images/} } 

\usepackage[T1]{fontenc}
\usepackage[utf8]{inputenc}

\newtheorem{theorem}{Theorem}[section]
\newtheorem*{theorem*}{Theorem}
\newtheorem{lemma}[theorem]{Lemma}

\theoremstyle{definition}
\newtheorem{definition}{Definition}[section]

\theoremstyle{remark}
\newtheorem{remark}[theorem]{Remark}

\newenvironment{packed_item}{
\begin{itemize}
\setlength{\itemsep}{1pt}
\setlength{\parskip}{0pt}
\setlength{\parsep}{0pt}
}{\end{itemize}}

\theoremstyle{definition}

\DeclareMathOperator{\im}{im}
\DeclareMathOperator{\Sym}{Sym}
\DeclareMathOperator{\symGE}{symGE}

\begin{document}

\title{On the Gauss-Epple homomorphism of the braid group $B_n$, and generalizations to Artin groups of crystallographic type}
\author{Joshua Guo\footnote{Newton South High School}
~ Kevin Chang\footnote{Department of Mathematics, Columbia University, New York, NY 10027} ~
}
\date{\today}
\maketitle

\vspace{-20pt}
\begin{abstract}
\normalsize
In this paper, we introduce a broad family of group homomorphisms that we name the Gauss-Epple homomorphisms. 
In the setting of braid groups, the Gauss-Epple invariant was originally defined by Epple based on a note of Gauss as an action of the braid group $B_n$ on the set $\{1, \dots, n\}\times\mathbb{Z}$; we prove that it is well-defined. 
We consider the associated group homomorphism from $B_n$ to the symmetric group $\Sym(\{1, \dots, n\}\times\mathbb{Z})$. 
We prove that this homomorphism factors through $\mathbb{Z}^n\rtimes S_n$ (in fact, its image is an order 2 subgroup of the previous group).
We also describe the kernel of the homomorphism and calculate the asymptotic probability that it contains a random braid of a given length. 
Furthermore, we discuss the super-Gauss-Epple homomorphism, a homomorphism which extends the generalization of the Gauss-Epple homomorphism and describe a related 1-cocycle of the symmetric group $S_n$ on the set of antisymmetric $n\times n$ matrices over the integers. 
We then generalize the super-Gauss-Epple homomorphism and the associated 1-cocycle to Artin groups of finite type. 
For future work, we suggest studying possible generalizations to complex reflection groups and computing the vector spaces of Gauss-Epple analogues.
\end{abstract}

\section{Introduction}

\subsection{Braid groups}

\begin{wrapfigure}[6]{r}{0.2\textwidth}
    \vspace{-20pt}
    \includegraphics[height=0.18\textwidth]{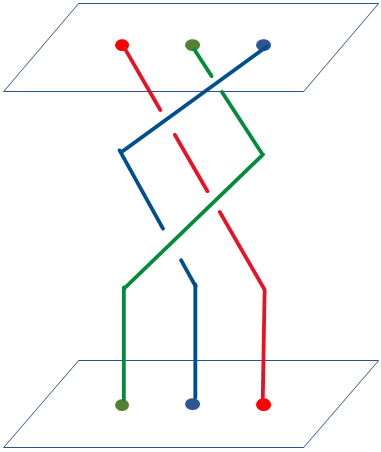}
    \caption{An example braid}
    \label{fig-example-braid}
\end{wrapfigure}

\hspace{\parindent}Fix, once and for all, an arbitrary integer $n\geq 2$; we shall use the notation $[n] := \{1, \dots, n\}$.
A \textit{braid} on $n$ strands is a topological object consisting of $n$ strands in 3-dimensional space whose endpoints are fixed to two distinguished parallel planes, such as in Figure \ref{fig-example-braid}.
(We assume, as is common, that the endpoints on each plane are collinear.
We also choose one plane to be the ``top'' and number the strands from $1$ to $n$ in their order on this plane; we choose the other to be the ``bottom''.)

\begin{figure}[h]
    \centering
    \includegraphics[width=0.4\textwidth]{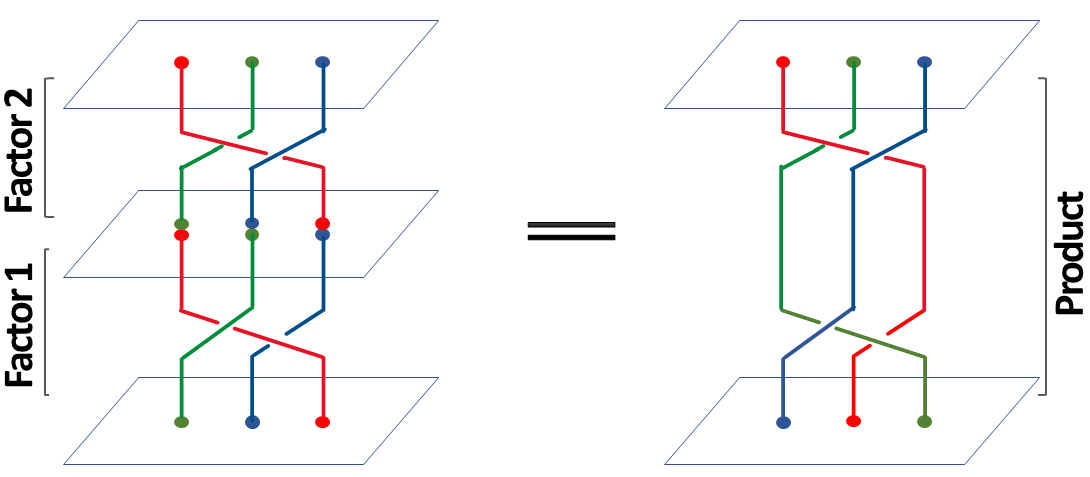} 
    \caption{Braid multiplication}
    \label{fig-braid-multiplication}
\end{figure}

By ``gluing'' two braids on $n$ strands together (i.e., identifying the ``bottom'' plane of the first with the ``top'' plane of the second, as depicted in Figure \ref{fig-braid-multiplication}), we can create a third braid on $n$ strands.
This operation, which we view as composition, endows the set of all braids on $n$ strands up to isotopy (i.e., topological deformation) with the structure of a monoid (i.e., a set with an associative and identity).
In fact, this operation is invertible; an example braid inverse is shown in Figure \ref{fig-braid-inverse}.
As a result, the set of all braids on $n$ strands up to isotopy is in fact a group, the \textit{braid group} $B_n$.

\begin{figure}[h]
    \centering
    \includegraphics[width=0.55\textwidth]{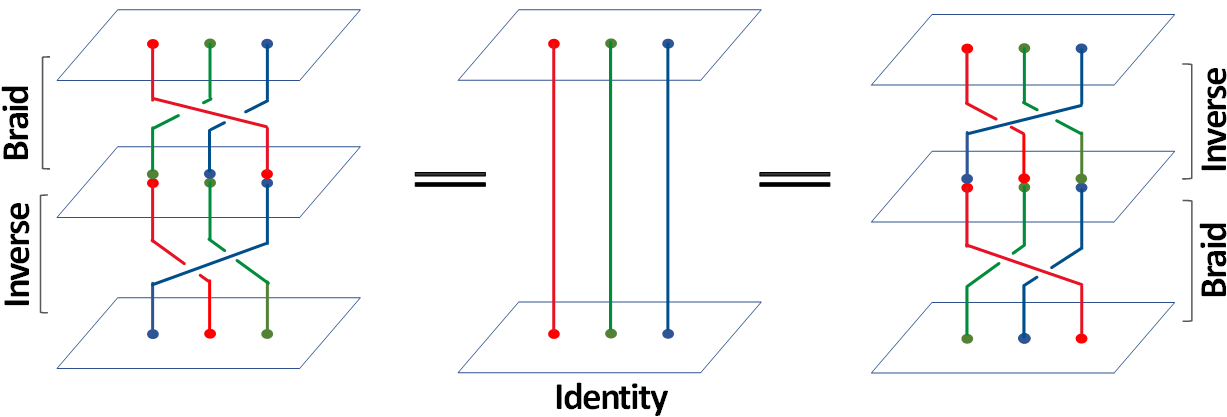} 
    \caption{A braid and its inverse}
    \label{fig-braid-inverse}
\end{figure}

The braid group $B_n$ is a well-studied mathematical object.
Emil Artin first proposed it in 1925 \cite{Artin1925} and further discussed it in a 1947 paper \cite{10.2307/1969218}.
It is well-known that $B_n$ has no torsion and that the full twist generates its center. Several efficient algorithms to solve $B_n$'s word and conjugacy problems are known \cite{gonzalez2011basic}.
It is also well-known (as the Nielsen-Thurston classification) that all braids are either \textit{periodic}, \textit{reducible}, or \textit{pseudo-Asonov} \cite{gonzalez2011basic}.
Furthermore, $B_n$ is linear, as there is a faithful (!) representation (the \textit{Lawrence-Krammer representation}) $B_n\to GL_{n(n-1)/2}(\mathbb{Z}[q^{\pm 1}, t^{\pm 1}])$ \cite{krammer2002braid}.

The braid group $B_n$ has a canonical generating set of $n-1$ generators known as the \textit{Artin generators} and conventionally denoted $\sigma_1, \dots, \sigma_{n-1}$.
The Artin generator $\sigma_i$ consists of the braid that twists the $i$th leftmost strand over and to the right of the $(i+1)$th strand.
With this generating set, the braid group has the following presentation:
\begin{align*}
    B_n = \langle \sigma_1, \dots, \sigma_{n-1} | \sigma_i\sigma_{i+1}\sigma_i = \sigma_{i+1}\sigma_i\sigma_{i+1}\forall i, \sigma_i\sigma_j = \sigma_j\sigma_i \forall i, j: |i - j| > 1\rangle.
\end{align*}

The \textit{writhe} is a group homomorphism $B_n\to\mathbb{Z}$ defined by the relation $\sigma_i\mapsto 1$. The image of any braid $\beta$ under writhe is called the \textit{writhe} of the braid, denoted $|\beta|$. The \textit{permutation} is a group homomorphism $B_n\to S_n$ defined by the relation $\sigma_i\mapsto(i, i + 1)$, where the right hand side is a transposition. The permutation of a braid is defined similarly and denoted $\pi_{\beta}$.

\begin{figure}[h]
    \centering
    \includegraphics[width=0.15\textwidth]{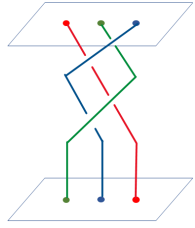}
    \caption{The braid $(\sigma_2^{-1}\sigma_1^{-1})^2$, which has permutation (1,3,2) and writhe~$-4$.}
\end{figure}

Both the writhe and permutation are surjective homomorphisms but \underline{not} injective.
The \textit{writhe-permutation homomorphism} $WP: B_n\to\mathbb{Z}\times S_n$ is defined by $\beta\mapsto (|\beta|, \pi_{\beta})$. It is neither surjective nor injective, as we shall prove later.

\subsection{Artin groups of crystallographic type}

\hspace{\parindent}An \textit{Artin group} $\mathcal{A}$ is a group presented by a finite set of generators and at most one braid relation (i.e., a relation of the form $a = b$, $ab = ba$, $aba = bab$, $abab = baba$, etc.) between any two generators.
The presentation of an Artin group can be depicted in a \textit{Dynkin diagram}, as shown in Figure \ref{fig-dynkin-diagrams}.

\begin{figure}[h]
    \vspace{10pt}
    \begin{center}
    \includegraphics[height=0.5\textwidth]{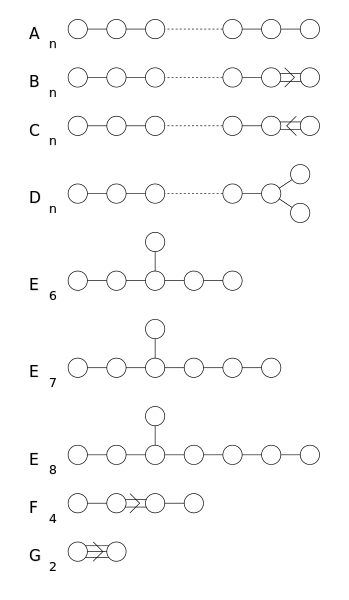}
    \caption{Dynkin diagrams of finite Coxeter groups
    \cite{dynkindiagrams2007commons}}
    \label{fig-dynkin-diagrams}
    \end{center}
\end{figure}
\begin{figure}
    \centering
    \includegraphics[height=100pt]{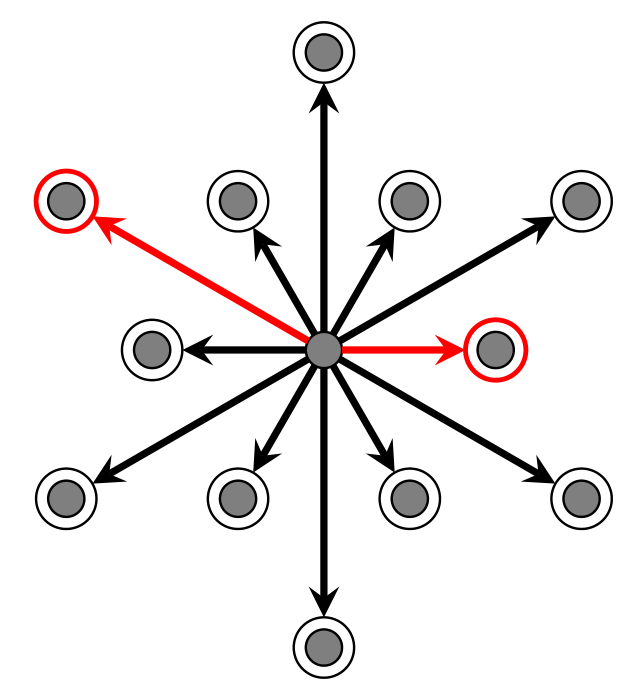}
    \caption{The $G_2$ root system with two simple roots highlighted \cite{nasmith2020medium}}
    \label{fig-g2-simpleroots}
\end{figure}
\vspace{-10pt}
A \textit{Coxeter group} $\mathcal{C}$ is a group presented with the generators and relations of an Artin group and the relations that the square of any generator is the identity.
A Coxeter group's presentation can also be depicted in a Dynkin diagram.

Sometimes, a Coxeter group is generated by the reflections upon a \textit{root system} $\Phi$, which is a special set of vectors.
(In this case, they are known as \textit{Weyl groups}.)
The associated Artin group $\mathcal{A}$ is then called an \textit{Artin group of crystallographic type}.
Furthermore, in the cases where the Coxeter group is indeed associated with a root system, an isomorphism between the Coxeter group and the group of linear transformations sends the canonical Coxeter group generators to the reflections associated with some choice of \textit{simple roots} $\Delta\subset\Phi$.
(We refer to the simple root associated to a canonical Artin generator $a$ of $\mathcal{A}$ as $\Delta_a$.)
One such set of simple roots, corresponding to the case commonly referred to as $G_2$, is shown in Figure \ref{fig-g2-simpleroots}.

\subsection{Complex reflection groups}

\begin{wrapfigure}[22]{r}{0.3\textwidth} 
    \vspace{-40pt}
        \includegraphics[height=0.7\textwidth]{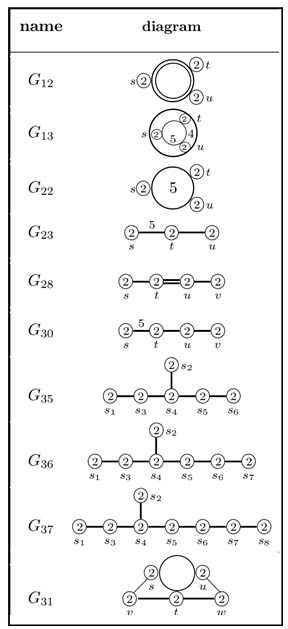} 
    \caption{Partial table of complex reflection groups \cite{Broue2010IntroductionTC}}
\end{wrapfigure}
\hspace{\parindent}We adopt the terminology and notation of Broué \cite{Broue2000}.

Let $V$ be some finite-dimensional $\mathbb{C}$-vector space.
A \textit{pseudo-reflection} is a linear transformation that acts trivially on some hyperplane (called its \textit{reflecting hyperplane}).

A \textit{complex reflection group} $W$ is a finite subgroup of $GL(V)$ generated by pseudo-reflections.

We denote the set of pseudo-reflections in $W$ by $\Psi$. (This notation is an exception; it comes from Dr. Trinh's notes \cite{mqt200207notes}, not from Broué.)
The hyperplane arrangement in $V$ consisting of the reflecting hyperplanes of the $\Psi$ is denoted by $\mathcal{A}$, and its hyperplane complement $V - \cup_{H\in\mathcal{A}} H$ is denoted as $M$.
We define the analogous \textit{braid group} $B := \pi_1(M/W)$; in other words, the fundamental group of $M$ up to rotation by elements of $W$.

Similar to the previous case, the braid groups $B$ have canonical Artin-like presentations, and the corresponding Coxeter groups $W$ have canonical Coxeter-like presentations \cite{Broue2000}.
More precisely, there exists a subset $\mathbf{S} = \{\mathbf{s}_1, \dots, \mathbf{s}_n\}$ of $B$ consisting of distinguished braid reflections, and a set $R$ of relations of the form $w_1 = w_2$, where $w_1$ and $w_2$
are positive words of equal length in the elements of $\mathbf{S}$, such that $\langle S|R\rangle$ is
a presentation of $B$.
Moreover, their images $s_1, \dots, s_n$ in $W$ generate $W$, and the group $W$ is presented by
$$\langle S|R;(\forall s\in S)(s^{e_s} =1)\rangle$$ where $e_s$ denotes the order of $s$ in $W$.

There is a quotient map $B\to W$; we denote the image of an element $b\in B$ under this quotient map simply as $W(b)$.

\subsection{Overview of results}

\hspace{\parindent}We study a large family of mathematical objects that we call the \textit{Gauss-Epple homomorphisms}.
This research helps us understand the structure of braid groups $B_n$, which describe the structure of \textit{braids}, a kind of topological object.

The first such homomorphism was implicitly introduced by Epple \cite{Epple98} as an action of $B_n$ based on a note by Gauss.
Around the time of his note, Gauss was primarily interested in topology for its applications to electromagnetism and celestial dynamics. In this work, we generalize this concept to a broader family of homomorphisms from Artin groups of finite type, a large family of groups including the braid groups.

Firstly, in Section \ref{sec-theGEhomor}, we prove that there exists a well-defined and unique left group action of $B_n$ on $\mathbb{Z}\times[n]$, as implicit in \cite{Epple98}.
We refer to this action as the \textit{Gauss-Epple action}.
It is equivalent to a group homomorphism from $B_n$ to $\Sym(\mathbb{Z}\times[n])$, which we call the \textit{Gauss-Epple homomorphism} (denoted $GE$).
In Subsection \ref{subsec-imWP}, we describe the \textit{writhe-permutation} homomorphism $WP: B_n\to\mathbb{Z}\times S_n$ (the homomorphism that maps a braid to its writhe and underlying permutation) in greater detail, as we treat this homomorphism as a toy model of $GE$. We show that the image of \textit{writhe-permutation} homomorphism $WP: B_n\to\mathbb{Z}\times S_n$ is a particular order 2 subgroup of $\mathbb{Z}\times S_n$.
Then, in Subsection \ref{subsec-imGE}, we show that the image of $GE$ is (isomorphic to) an order 2 subgroup of $\mathbb{Z}^n\rtimes S_n$.
In Subsection \ref{subsec-kerGE}, we discuss the kernel of $GE$. We find that this group is strictly contained in the kernel of the \textit{writhe-permutation homomorphism}.

We summarize these results with the following commutative diagram:

\[\begin{tikzcd}
	{B_ n} & {\mathbb{Z}^n\rtimes S_n} & {\Sym([n]\times\mathbb{Z})} \\
	& {\mathbb{Z}\times S_n}
	\arrow[from=1-1, to=1-2]
	\arrow[from=1-2, to=1-3]
	\arrow[from=1-2, to=2-2]
	\arrow["WP", from=1-1, to=2-2]
	\arrow["GE", curve={height=-12pt}, from=1-1, to=1-3]
\end{tikzcd}\]

Each of the maps in this diagram is a group homomorphism, as explained below:

\begin{itemize}
    \item The map $GE$, from $B_n$ to $\mathrm{Sym}(\mathbb{Z}^2)$, is the Gauss-Epple action of $B_n$ on $\mathbb{Z}^2$.
    \item The map from $B_n$ to $\mathbb{Z}^n\rtimes S_n$ maps a braid to the tuple of its vector and its permutation.
    \item The map from $\mathbb{Z}^n\rtimes S_n$ to $\mathrm{Sym}(\mathbb{Z}^2)$ is the map $(\pi, \ell)\mapsto ((a, b) \mapsto (\pi(a), b + \ell_a))$.
    \item The map $WP$, from $B_n$ to $\mathbb{Z}\times S_n$, maps a braid to its writhe and braid permutation.
    \item The map from $\mathbb{Z}^n\rtimes S_n$ to $\mathbb{Z}\times S_n$ maps $(\ell, \pi)$ to $(\sum\ell, \pi)$.
\end{itemize}

In Section \ref{sec-symGE}, we briefly discuss another action of $B_n$ (the \textit{symmetric-Gauss-Epple action}) mentioned by Epple.
It turns out that this action has very similar properties to the Gauss-Epple action, including sharing the same kernel.

In Section \ref{sec-superGE}, we discuss the \textit{super-Gauss-Epple homomorphism} (denoted $SGE$), a homomorphism of $B_n$ that refines $GE$.
To describe the image of $SGE$, we introduce a 1-cocycle of the symmetric group $S_n$ on the set of $n\times n$ antisymmetric matrices, which we prove has a remarkable nonnegativity property.

Finally, in Section \ref{sec-ATgroupsWeyl}, we generalize our results to the contexts of Artin groups of finite type, a broad family of groups that generalize the braid groups.
More specifically, we introduce a family of novel homomorphisms (with domain $\mathcal{A}$ and range $\mathbb{Z}^{\Phi}\rtimes\mathcal{C}$) that are analogous to the super-Gauss-Epple homomorphism in the classical case of $B_n$.
We also note corresponding 1-cocycles, which also share an analogous nonnegativity property.

We close in Section \ref{sec-complex} with remarks about complex reflection groups.

\subsubsection{Acknowledgements}

\hspace{\parindent}We thank our advisor, Minh-Tâm Trinh, for proposing this project and giving us helpful directions.
We also thank the MIT PRIMES program for supporting our research.

\section{The Gauss-Epple homomorphism}\label{sec-theGEhomor}

\begin{figure}
    \begin{center}\includegraphics[width=0.5\textwidth]{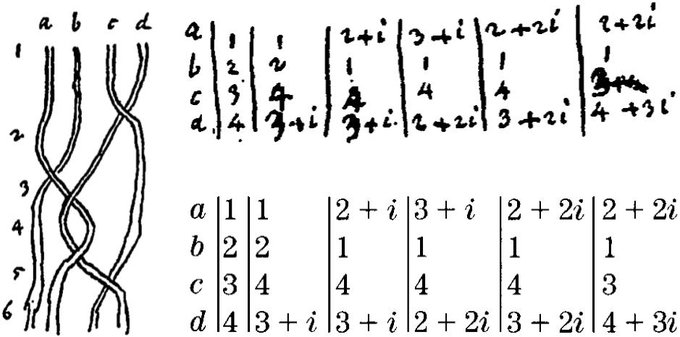}\end{center}
    \caption{Page 283 of Gauss's Handbuch 7 \cite{Epple98}}
\end{figure}

\hspace{\parindent}We formally introduce the Gauss-Epple action. 
This action was initially defined by Epple \cite{Epple98} based on a note of Gauss.
Since Gauss's notation involved complex numbers (more precisely, the Gaussian integers $\mathbb{Z}[i]$), the Gauss-Epple action was originally defined in terms of complex numbers.
For notational and theoretical simplicity, however, we will define the Gauss-Epple action as an action of $B_n$ on $[n]\times\mathbb{Z}$ instead of on $\mathbb{Z}[i]$.
Our reasons for doing so should become clear shortly.
We also view the Gauss-Epple action as a group homomorphism from $B_n$ to $\Sym([n]\times\mathbb{Z})$.
(This is a specific instance of .the classical trick to convert between group actions $G\times X \to X$ and group homomorphisms $G \to (X \to X)$: by currying the input.)

As defined by Epple (based on Gauss's notation), the Gauss-Epple homomorphism is given as follows:
\begin{definition}[Gauss-Epple action]\label{def-eppleGE}
For any $n\in\mathbb{N}$, the Gauss-Epple action $\alpha: B_n\times \mathbb{Z}[i]\to \mathbb{Z}[i]$ is the unique left group action of $B_n$ (with canonical Artin generators $\sigma_1, \sigma_2, \dots$) on the Gaussian integers $\mathbb{Z}[i]$ defined by the following generating relation:

$$\alpha(\sigma_k, z) := \begin{cases}
z &  \Re(z)\notin\{k, k + 1\}\\
z + 1 &  \Re(z) = k\\
z - 1 + i &  \Re(z) = k + 1
\end{cases}.$$
\end{definition}

We simplify this definition by replacing each complex number involved with the ordered pair of its real and complex parts, and then restricting the first component to the elements of $[n]$.
This notational change yields an action on the elements of $[n]\times\mathbb{Z}$.
Therefore, as we define it, the Gauss-Epple action is given as follows:

\begin{definition}[Gauss-Epple action]\label{def-GE}
For any $n\in\mathbb{N}$, the Gauss-Epple action $GE: B_n\times ([n]\times\mathbb{Z})\to [n]\times\mathbb{Z}$ is the unique left group action of $B_n$ (with canonical Artin generators $\sigma_1, \sigma_2, \dots$) on $[n]\times\mathbb{Z}$ defined by the following generating relation:

$$GE(\sigma_k, (a, b)) := \begin{cases}
(a, b) & a\notin\{k, k + 1\}\\
(k + 1, b) & a = k\\
(k, b + 1) & a = k + 1
\end{cases}.$$
\end{definition}

\begin{remark}
Note that this definition implies that the inverses of the Artin generators act as follows:

$$GE(\sigma_k^{-1}, (a, b)) = \begin{cases}
(a, b) & a\notin\{k, k + 1\}\\
(k + 1, b - 1) & a = k\\
(k, b) & a = k + 1
\end{cases}.$$
\end{remark}
\vspace{30pt}

We now prove that the Gauss-Epple action is well-defined, a fact stated without proof by Epple \cite{Epple98}.

\begin{lemma}\label{lem-GE-welldefined}
The Gauss-Epple action $GE$ is uniquely defined by Definition \ref{def-GE} as a left group action of $B_n$ on $\mathbb{Z}^2$.
\end{lemma}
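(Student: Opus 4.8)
The plan is to invoke the universal property of the presentation of $B_n$. Giving a left action of $B_n$ on $[n]\times\mathbb{Z}$ is equivalent to giving a group homomorphism $B_n\to\Sym([n]\times\mathbb{Z})$, and since $B_n$ is presented by the Artin generators $\sigma_1,\dots,\sigma_{n-1}$ subject only to the braid relations, such a homomorphism exists and is \emph{unique} precisely when we (i) assign to each $\sigma_k$ an honest bijection $\phi_k$ of $[n]\times\mathbb{Z}$, and (ii) check that the $\phi_k$ satisfy those same relations. Uniqueness then comes for free, since the images of the generators determine the homomorphism; so the whole lemma reduces to verifications (i) and (ii).

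For (i), I would take $\phi_k$ to be the map $GE(\sigma_k,-)$ of Definition \ref{def-GE}. Its candidate inverse is exactly the map $GE(\sigma_k^{-1},-)$ recorded in the Remark, so it suffices to compose the two (in both orders) and confirm one recovers the identity. This is a short case split according to whether the first coordinate $a$ lies in $\{k,k+1\}$, and I anticipate no difficulty.

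Verification (ii) is the substance of the proof and splits into two relation families. The far-commutation relations $\phi_i\phi_j=\phi_j\phi_i$ for $|i-j|>1$ are essentially immediate: the index sets $\{i,i+1\}$ and $\{j,j+1\}$ are then disjoint, and each $\phi$ alters only those points whose first coordinate lies in its own index set (permuting that coordinate within the set), so the two maps act on disjoint regions of $[n]\times\mathbb{Z}$ and commute. The braid relation $\phi_i\phi_{i+1}\phi_i=\phi_{i+1}\phi_i\phi_{i+1}$ is where I expect the main obstacle. Both sides fix every point whose first coordinate lies outside $\{i,i+1,i+2\}$, so it is enough to push the three cases $a\in\{i,i+1,i+2\}$ (with $b$ arbitrary) through each length-three composite and check that the outputs agree. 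The difficulty is purely bookkeeping: one must track carefully how the first coordinate migrates among $i,i+1,i+2$ and how the increments in the second coordinate accumulate across the three applications.

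Once both families are confirmed, the universal property yields the unique homomorphism $B_n\to\Sym([n]\times\mathbb{Z})$ extending $\sigma_k\mapsto\phi_k$, which is exactly the claimed Gauss-Epple action, with uniqueness inherited from the presentation.
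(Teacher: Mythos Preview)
Your proposal is correct and follows essentially the same route as the paper: reduce well-definedness to checking that the assigned bijections $\phi_k$ satisfy the far-commutation and braid relations, then carry out each by a case split on the first coordinate $a$. The only difference is cosmetic---you frame the argument more explicitly via the universal property of the presentation and include the bijectivity check for $\phi_k$, which the paper leaves implicit; the actual case-by-case verifications you outline are exactly those the paper performs.
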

\begin{proof}
Since the Artin generators satisfy the generating relations $\sigma_k \sigma_l = \sigma_l \sigma_k$ for all $k, l$ such that $|k - l| \geq 2$ and $\sigma_k\sigma_{k+1}\sigma_k = \sigma_{k+1}\sigma_k\sigma_{k+1}$ for all $k\in[n - 1]$, it is enough to verify that that the analogous relations hold for the Gauss-Epple action to prove Lemma \ref{lem-GE-welldefined}.

Suppose that $k, l$ are two integers satisfying $|k - l|\geq 2$. We observe that $\{k, k + 1\}\cap\{l, l + 1\}=\varnothing$, which will become important later. Let $a, b$ be arbitrary members of $\mathbb{Z}$.
We prove that, for all integers $k, l$ satisfying $|k - l|\geq 2$, we have $GE(\sigma_k)GE(\sigma_l) = GE(\sigma_l)GE(\sigma_k)$, as follows:

\begin{packed_item}
\item If $a\notin\{k, k + 1, l, l + 1\}$, then $\sigma_k\sigma_l(a, b) = (a, b) = \sigma_l\sigma_k(a, b)$.
\item If $a\in\{k, k +1\}$, then $\sigma_k\sigma_l(a, b) = \sigma_k(a, b) = \sigma_l\sigma_k(a, b)$. 
(From the second to third step, we use the fact that the first component of $\sigma_k(a, b)$ is in $\{k, k + 1\}$, and so outside of $\{l, l + 1\}$.)
\item If $a\in\{l, l + 1\}$, then $\sigma_k\sigma_l(a, b) = \sigma_l(a, b) = \sigma_l\sigma_k(a, b)$.
\end{packed_item}

Again, suppose that $k, a, b$ are arbitrary integers satisfying $k\in\{1, \dots, n - 2\}$.
Then we prove that we have $GE(\sigma_k)GE(\sigma_{k+1})GE(\sigma_k) = GE(\sigma_{k+1})GE(\sigma_k)GE(\sigma_{k+1})$ as follows:

\begin{packed_item}
\item If $a\notin\{k, k + 1, k + 2\}$, then $\sigma_k\sigma_{k+1}\sigma_k(a, b) = (a, b) = \sigma_{k+1}\sigma_k\sigma_{k+1}(a, b)$.
\item If $a = k$, then $\sigma_k\sigma_{k+1}\sigma_k(a, b) = \sigma_k\sigma_{k+1}(k + 1, b) = \sigma_k(k + 2, b) = (k + 2, b)$,
and $\sigma_{k+1}\sigma_k\sigma_{k+1}(a, b) = \sigma_{k + 1}\sigma_k(k, b) = \sigma_{k+1}(k + 1, b) = (k + 2, b)$.
\item If $a = k + 1$, then $\sigma_k\sigma_{k+1}\sigma_k(a, b) = \sigma_k\sigma_{k+1}(k, b + 1) = \sigma_k(k, b + 1) = (k + 1, b + 1)$,
and $\sigma_{k+1}\sigma_k\sigma_{k+1}(a, b) = \sigma_{k + 1}\sigma_k(k + 2, b) = \sigma_{k+1}(k + 2, b) = (k + 1, b + 1)$.
\item If $a = k + 2$, then $\sigma_k\sigma_{k+1}\sigma_k(a, b) = \sigma_k\sigma_{k+1}(k + 2, b) = \sigma_k(k + 1, b + 1) = (k, b + 2)$,
and $\sigma_{k+1}\sigma_k\sigma_{k+1}(a, b) = \sigma_{k + 1}\sigma_k(k + 1, b + 1) = \sigma_{k+1}(k, b + 2) = (k, b + 2)$.
\end{packed_item}
\end{proof}

From now on, we consider GE as a group homomorphism $GE: B_n\to\Sym([n]\times\mathbb{Z})$

\begin{remark}
Note that, by letting $n$ vary, we can extend the Gauss-Epple homomorphism $GE: B_n \to\Sym([n]\times\mathbb{Z})$ to a homomorphism $GE: B_{\infty}\to\Sym(\{1, 2, \dots\}\times\mathbb{Z})$, where the group $B_{\infty}$ is defined to be the direct limit of the groups $B_n$. 
However, we do not pursue this direction.
\end{remark}

\subsection{Image of WP}\label{subsec-imWP}
\hspace{\parindent}Firstly, note the following classical fact, which motivates the study of 
\text{images} of group homomorphisms:

\begin{theorem}[First Isomorphism Theorem]\label{thm-1st-isothm}
Let $\phi: G\to H$ be a group homomorphism. Then $\im(\phi) \cong G/\ker\phi$.
\end{theorem}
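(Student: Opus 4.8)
The plan is to exhibit an explicit isomorphism from the quotient group $G/\ker\phi$ onto the subgroup $\im(\phi)\leq H$. Writing $K := \ker\phi$, I would first recall the standard fact that $K$ is a normal subgroup of $G$ (since for any $g\in G$ and $k\in K$ one computes $\phi(gkg^{-1}) = \phi(g)\phi(k)\phi(g)^{-1} = \phi(g)\phi(g)^{-1} = e$), so that the quotient $G/K$ is itself a group and the statement is meaningful. I would then define the candidate map $\bar\phi: G/K\to\im(\phi)$ by $\bar\phi(gK) := \phi(g)$, which is the only natural choice.

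The key steps, carried out in order, are to verify the four properties that together make $\bar\phi$ an isomorphism. First, well-definedness: if $g_1 K = g_2 K$, then $g_1^{-1}g_2\in K$, so $\phi(g_1^{-1}g_2) = e$ and hence $\phi(g_1) = \phi(g_2)$, meaning the value $\bar\phi(gK)$ does not depend on the chosen coset representative. Second, $\bar\phi$ is a homomorphism, since $\bar\phi(g_1 K\cdot g_2 K) = \bar\phi(g_1 g_2 K) = \phi(g_1 g_2) = \phi(g_1)\phi(g_2) = \bar\phi(g_1 K)\bar\phi(g_2 K)$. Third, surjectivity onto $\im(\phi)$ is immediate, as every element of $\im(\phi)$ has the form $\phi(g) = \bar\phi(gK)$ for some $g\in G$. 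Fourth, injectivity: if $\bar\phi(gK) = e$, then $\phi(g) = e$, so $g\in K$ and thus $gK = K$ is the identity coset, showing that $\bar\phi$ has trivial kernel.

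Having established that $\bar\phi$ is a bijective homomorphism, I would conclude that it is an isomorphism, so that $G/\ker\phi\cong\im(\phi)$, completing the proof. I expect the only genuinely delicate point to be the well-definedness check, which is precisely where the hypothesis that $K$ is the \emph{kernel} (rather than an arbitrary subgroup) enters in an essential way; every remaining verification is a routine translation of the group axioms together with the homomorphism property of $\phi$.
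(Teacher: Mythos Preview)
Your proof is correct and is the standard textbook argument for the First Isomorphism Theorem. The paper, however, does not actually prove this statement: it is quoted as a ``classical fact'' without proof, serving only to motivate why computing images of the homomorphisms $WP$, $GE$, etc.\ is worthwhile. So there is no paper-proof to compare against; your argument simply supplies what the paper takes for granted.
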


For the group homomorphisms we study, it turns out that the images are much simpler than the kernels. Hence we will completely determine the images but only briefly discuss the kernels.

Since the Gauss-Epple action permutes the $a$ part without respect to the $b$ part according to the permutation of the braid, it is evident that braids in the kernel of the Gauss-Epple action have the identity permutation (i.e., are pure).
Early on, however, we discovered that braids in the kernel of the Gauss-Epple homomorphism have zero writhe too (we will prove this in Lemma \ref{lem-sum-link-writhe}).
Hence we decided to study the invariant $WP$ (which stands for ``writhe-permutation''), a weaker invariant than the Gauss-Epple homomorphism.
This invariant, too, turns out to be a group homomorphism: in this case, with a range of $\mathbb{Z}\times S_n$.
In fact, we find that the image of $WP$ is an order 2 subgroup of the range:

\begin{theorem}[Structure of imWP]\label{thm-structure-imWP}
There is a braid on $n$ strands with writhe $w\in\mathbb{Z}$ and permutation $\pi\in S_n$ iff (if and only if) $\pi$ and $w$ have the same parity.
\end{theorem}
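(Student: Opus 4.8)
The plan is to prove both directions of the ``iff'' by first establishing the easy direction (parity agreement is necessary) and then constructing, for each admissible pair $(w, \pi)$, an explicit braid realizing it.

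For the necessity direction, I would exploit the fact that both $|\cdot|$ (writhe) and $\pi_{(\cdot)}$ (permutation) are group homomorphisms, so it suffices to check the parity statement on the generators $\sigma_i$ and observe that it is preserved under multiplication. Each Artin generator $\sigma_i$ has writhe $1$, so $|\beta|$ equals the total number of generators (counted with sign) in any word for $\beta$; reducing mod $2$, the writhe parity is the word length mod $2$. Likewise, $\pi_\beta$ is a product of that same number of transpositions, so the sign of $\pi_\beta$ is $(-1)^{|\beta|}$. Writing $\mathrm{sgn}(\pi_\beta) = (-1)^{|\beta|}$ makes the parity coincidence immediate: a braid's permutation is even exactly when its writhe is even. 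This shows the image of $WP$ lands in the index-$2$ subgroup $\{(w, \pi) : w \equiv \mathrm{sgn}(\pi) \bmod 2\}$, which accounts for the ``order 2 subgroup'' claim in the overview (i.e., index $2$).

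For the sufficiency direction, the plan is to show that every pair $(w, \pi)$ with matching parity is actually hit. First I would produce a braid realizing $(w, \pi)$ for an arbitrary target by decomposing the problem: choose any word $\beta_0$ in the $\sigma_i$ whose permutation is $\pi$ (possible since the permutation map $B_n \to S_n$ is surjective), and let $w_0 = |\beta_0|$. By the necessity direction, $w_0 \equiv \mathrm{sgn}(\pi) \equiv w \pmod 2$, so $w - w_0$ is even, say $w - w_0 = 2m$. Now I would adjust the writhe without disturbing the permutation by multiplying by $\sigma_1^{2m}$ (a full double twist on the first two strands), since $\sigma_1^2$ has writhe $2$ and trivial permutation. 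The resulting braid $\beta = \beta_0 \sigma_1^{2m}$ then has permutation $\pi$ and writhe $w_0 + 2m = w$, as desired. This handles both signs of $m$ because $\sigma_1^{-2}$ works identically for negative adjustments.

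The main obstacle, such as it is, is conceptual rather than technical: one must be careful that the two homomorphisms are genuinely ``linked'' only through parity and are otherwise independent, so that the writhe can be tuned freely (in steps of $2$) once the permutation is fixed. The key insight enabling the construction is that $\sigma_1^2$ is a nontrivial element of the kernel of the permutation map with nonzero writhe, which lets me move within a fixed permutation class while sweeping through all writhes of the correct parity. I expect the necessity direction to be the genuinely load-bearing part, since it pins down the parity constraint $\mathrm{sgn}(\pi_\beta) = (-1)^{|\beta|}$; the sufficiency is then a routine matter of exhibiting the adjusting factor.
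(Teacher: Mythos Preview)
Your proposal is correct and follows essentially the same approach as the paper: the necessity direction uses that each Artin generator contributes $+1$ to the writhe and one transposition to the permutation (so $\mathrm{sgn}(\pi_\beta)=(-1)^{|\beta|}$), and the sufficiency direction lifts $\pi$ to an arbitrary braid and then corrects the writhe by an even power of $\sigma_1$. The only cosmetic difference is that the paper writes the correction factor as $\sigma_1^{w-|\beta'|}$ and observes it is pure, whereas you first note $w-w_0=2m$ and write $\sigma_1^{2m}$.
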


\begin{proof}
\textbf{Necessity}: Suppose that there is a braid on $n$ strands with writhe $w\in\mathbb{Z}$ and permutation $\pi\in S_n$. 
If the number of twists of the braid is even, both $\pi$ and $w$ will also necessarily be even.
Similarly, if the number of twists of the braid is odd, both $\pi$ and $w$ will necessarily be odd.
Thus the claim holds.

\textbf{Sufficiency}: Let $\pi$ and $w$ be arbitrary permutations and integers with the same parity.
Since $S_n$ is generated by $(1, 2), (2, 3), \dots, (n - 1, n)$, we can write $\pi = \prod_{i = 1}^f t_{a_i}$, where $t_i = (i, i + 1)$ and $a_i$ is some arbitrary sequence of integers.

Consider the braid $\beta' = \prod_{i = 1}^f \sigma_{a_i}$, where $a_i$ is as before. 
Trivially, $\beta'$ has permutation $\pi$.
By the argument in the previous part of the proof, the writhe of $\beta'$ has the same parity as $\pi$, which has the same parity as $w$, so that $\sigma_1^{w - |\beta'|}$ is pure.
We conclude that the braid $\beta = \beta'\sigma_1^{w - |\beta'|}$ has writhe $w$ and permutation $\pi$, as desired,  and we are done.
\end{proof}
It is clear that since the image of $WP$ is not $B_n$ (nor is it the image of $GE$), that $WP$ must have a nontrivial kernel, and moreover, an element lying outside of the kernel of $GE$.
We can make this explicit by giving the example $\sigma_1^{-2}\sigma_2^2$.

\subsection{Image of GE}\label{subsec-imGE}

\hspace{\parindent}The computation of the image of the Gauss-Epple homomorphism (henceforth abbreviated \textit{imGE}) is similar, albeit much more tedious. Firstly, we prove that the Gauss-Epple action factors through $\mathbb{Z}^n\rtimes S_n$.
(The group rule of $\mathbb{Z}^n\rtimes S_n$ is $(\pi, \ell)(\pi', \ell') = (\pi\pi', \ell' + \pi'\ell)$.)

\begin{theorem}[Structure theorem for imGE]\label{thm-factor-imGE}
Define the group inclusion $\iota: \mathbb{Z}^n\rtimes S_n\to\Sym([n]\times\mathbb{Z})$ by the equation $(\pi, \ell)\to ((a, b)\to (\pi(a), b + \ell(a))$,
and define the group surjection $\varsigma: B_n\to\mathbb{Z}^n\rtimes S_n$ by 
$$\varsigma(\sigma_k) := ((k, k + 1), \vec{e_k}).$$
Then $GE = \varsigma\circ\iota$.
\end{theorem}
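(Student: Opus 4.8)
The plan is to prove the factorization in two moves: verify by hand that $\iota$ is an injective homomorphism, and then, rather than re-checking the braid relations inside $\mathbb{Z}^n\rtimes S_n$, simply pull the already-constructed homomorphism $GE$ back through $\iota$, leaning on Lemma~\ref{lem-GE-welldefined}. Throughout I read the composite in the statement as the map $B_n \xrightarrow{\varsigma} \mathbb{Z}^n\rtimes S_n \xrightarrow{\iota} \Sym([n]\times\mathbb{Z})$, which I denote $\iota\circ\varsigma$.

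First I would confirm that $\iota$ is a group homomorphism, a direct computation against the stated group law $(\pi,\ell)(\pi',\ell')=(\pi\pi',\ell'+\pi'\ell)$. Composing the two permutations of $[n]\times\mathbb{Z}$ gives $(a,b)\mapsto(\pi\pi'(a),\,b+\ell'(a)+\ell(\pi'(a)))$, and this matches $\iota(\pi\pi',\ell'+\pi'\ell)$ exactly when the $S_n$-action is $(\pi'\ell)(a)=\ell(\pi'(a))$; so the one thing to pin down carefully is the action convention hidden in the symbol $\pi'\ell$. Injectivity is then immediate, since $\iota(\pi,\ell)=\mathrm{id}$ forces $\pi(a)=a$ and $\ell(a)=0$ for all $a$. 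Hence $\im\iota$ is a subgroup of $\Sym([n]\times\mathbb{Z})$ carrying a copy of $\mathbb{Z}^n\rtimes S_n$.

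The crux is then a generator-by-generator comparison of $\iota(\varsigma(\sigma_k))$ with Definition~\ref{def-GE}, split into the three cases $a\notin\{k,k+1\}$, $a=k$, and $a=k+1$. The transposition component $(k,k+1)$ reproduces the action of $\sigma_k$ on the first coordinate in all three cases; the only delicate point is that, because $\iota(\pi,\ell)$ shifts $b$ by $\ell(a)$ (indexed by the incoming strand), the $\mathbb{Z}^n$-component of $\varsigma(\sigma_k)$ must place its single $+1$ on the strand $a=k+1$, which is precisely where $GE(\sigma_k)$ increments the second coordinate. This computation both fixes the correct standard basis vector and shows $GE(\sigma_k)\in\im\iota$ for every $k$.

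Finally I would assemble these: since the $\sigma_k$ generate $B_n$ and $\im\iota$ is a subgroup, $GE(B_n)\subseteq\im\iota$, so $\iota^{-1}\circ GE\colon B_n\to\mathbb{Z}^n\rtimes S_n$ is a well-defined composite of homomorphisms sending each $\sigma_k$ to $\varsigma(\sigma_k)$. A homomorphism is determined by its values on generators, so this composite is exactly $\varsigma$; this simultaneously proves that $\varsigma$ is well-defined (i.e.\ respects the braid relations) and that $\iota\circ\varsigma=GE$. I expect the main obstacle to be purely bookkeeping in the generator comparison---tracking both coordinates at once and getting the basis-vector index right---rather than anything structural; the pull-back through $\iota$ is what lets me inherit well-definedness of $\varsigma$ from Lemma~\ref{lem-GE-welldefined} instead of grinding out the length-three braid relation in the semidirect product directly. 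I would also record that $\varsigma$ is not onto $\mathbb{Z}^n\rtimes S_n$: the assignment sending $(\pi,\ell)$ to $\sum_a\ell(a)$ plus the parity of $\pi$, taken in $\mathbb{Z}/2$, is a homomorphism to $\mathbb{Z}/2$ that kills every $\varsigma(\sigma_k)$, so $\im\varsigma$ sits inside an index-two subgroup, matching the image computed below.
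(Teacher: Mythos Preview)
Your approach is essentially the paper's---check the factorization on the Artin generators, then extend by the homomorphism property---only done more carefully: the paper's proof is literally two sentences (``trivial on generators, then induction''), asserting without verification that $\iota$ and $\varsigma$ are homomorphisms, whereas you verify $\iota$ directly and then recover $\varsigma$ as $\iota^{-1}\circ GE$, thereby inheriting its well-definedness from Lemma~\ref{lem-GE-welldefined} rather than re-running the braid relations in $\mathbb{Z}^n\rtimes S_n$.

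You also catch two genuine slips in the statement that the paper's brief proof glosses over. First, with $\iota(\pi,\ell)(a,b)=(\pi(a),b+\ell(a))$ and Definition~\ref{def-GE} incrementing $b$ only when $a=k+1$, the $\mathbb{Z}^n$-component of $\varsigma(\sigma_k)$ must be $\vec e_{k+1}$, not $\vec e_k$ as printed; your generator comparison makes this explicit. Second, $\varsigma$ is not a surjection: your parity argument shows its image lies in an index-two subgroup, in agreement with Theorem~\ref{thm-imGE}. Both corrections are well taken.
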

\begin{proof}

It is trivial to verify that $GE = \varsigma\circ\iota$ for the Artin generators.
By induction, and by the fact that all three functions are group homomorphisms, it follows that $GE = \varsigma\circ\iota$ for all braids.
\end{proof}

As an aside, we observe the Gauss-Epple action is transitive when restricted to $[n]\times\mathbb{Z}$.
However, as we can deduce from Theorem \ref{thm-factor-imGE}, it is NOT doubly transitive.

We also relate imWP to imGE with the following fact (which motivates the earlier study of imWP):

\begin{lemma}\label{lem-sum-link-writhe}
Let $\beta$ be a braid, and suppose that $\varsigma(\beta) = (\pi, \ell)$ (where $\varsigma$ is defined as in Theorem \ref{thm-factor-imGE}). Then the sum of the components of $\ell$ is the writhe of $\beta$.
\end{lemma}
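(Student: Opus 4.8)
The plan is to establish the claim by showing that both sides of the equation—``the sum of the components of $\ell$'' and ``the writhe of $\beta$''—are group homomorphisms from $B_n$ to $\mathbb{Z}$, and then verifying they agree on the Artin generators $\sigma_k$. Since the Artin generators generate $B_n$, agreement on generators forces agreement everywhere. This is the same strategy used in the proof of Theorem \ref{thm-factor-imGE}, and it is the natural approach given that $\varsigma$ is already known to be a group homomorphism.

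First I would define the map $\Sigma: \mathbb{Z}^n\rtimes S_n \to \mathbb{Z}$ that sends a pair $(\pi, \ell)$ to $\sum_{a=1}^n \ell(a)$, the sum of the components of the $\mathbb{Z}^n$-part. The key subclaim is that $\Sigma$ is itself a group homomorphism. To check this, I would use the group law $(\pi, \ell)(\pi', \ell') = (\pi\pi', \ell' + \pi'\ell)$ stated just before Theorem \ref{thm-factor-imGE}; applying $\Sigma$ gives $\sum(\ell' + \pi'\ell) = \sum\ell' + \sum(\pi'\ell)$, and since $\pi'$ merely permutes the components of $\ell$, we have $\sum(\pi'\ell) = \sum\ell$. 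Hence $\Sigma((\pi,\ell)(\pi',\ell')) = \Sigma(\pi',\ell') + \Sigma(\pi,\ell)$, confirming that $\Sigma$ is a homomorphism (the order of summands is immaterial since $\mathbb{Z}$ is abelian). The crucial observation here is that the permutation action on $\mathbb{Z}^n$ preserves the coordinate sum, which is exactly what makes the total-sum map descend to a well-defined homomorphism.

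Next I would observe that the composite $\Sigma \circ \varsigma: B_n \to \mathbb{Z}$ is a group homomorphism, being a composition of the two homomorphisms $\varsigma$ and $\Sigma$. On the generator $\sigma_k$, we have $\varsigma(\sigma_k) = ((k,k+1), \vec{e_k})$, so $\Sigma(\varsigma(\sigma_k)) = \sum_a (\vec{e_k})(a) = 1$, since $\vec{e_k}$ is the standard basis vector with a single $1$. On the other hand, the writhe $|\cdot|: B_n \to \mathbb{Z}$ is the homomorphism defined by $\sigma_k \mapsto 1$. Thus $\Sigma \circ \varsigma$ and the writhe are two homomorphisms $B_n \to \mathbb{Z}$ that agree on every Artin generator.

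Finally, since $B_n$ is generated by the $\sigma_k$ and two homomorphisms agreeing on a generating set must be equal, I would conclude that $\Sigma(\varsigma(\beta)) = |\beta|$ for every braid $\beta$, which is precisely the statement that the sum of the components of $\ell$ equals the writhe of $\beta$. I do not anticipate a genuine obstacle here; the only point requiring slight care is the verification that $\Sigma$ respects the semidirect-product multiplication, i.e.\ that the permutation twisting does not disturb the coordinate sum. Everything else is a routine ``agree on generators'' argument.
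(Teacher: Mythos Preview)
Your proposal is correct and follows essentially the same approach as the paper: the paper defines the sum-of-components map (there called $\varkappa$), verifies directly from the semidirect-product multiplication that it is a homomorphism, and then concludes by checking that $\varkappa\circ\varsigma$ and the writhe agree on the Artin generators. Your argument is the same in substance and structure, differing only in notation.
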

\begin{proof}

Note that $\varkappa: \mathbb{Z}^n\rtimes S_n\to\mathbb{Z}$ defined by $\varkappa((\pi, \ell)) := \sum_i \ell_i$ is a group homomorphism.
We know that $\varkappa$ is a group homomorphism since, for any $\pi, \ell, \pi', \ell'$, we have \begin{align*}
    \varkappa((\pi, \ell)(\pi', \ell')) & = \varkappa((\pi\pi', \ell\pi' + \ell'))\\
    &= \sum_i (\ell(\pi'(i)) + \ell'(i))\\
    &= \sum_i \ell(\pi'(i)) + \sum_i \ell'(i)\\
    &= \sum_i \ell(i) + \sum_i \ell'(i)\\
    &= \varkappa((\pi, \ell)) + \varkappa((\pi', \ell')),
\end{align*}
as desired.

Hence $\varkappa\circ\varsigma$ is a group homomorphism from $B_n$ to $\mathbb{Z}$.

We also note that writhe is a group homomorphism from $B_n$ to $\mathbb{Z}$.
Furthermore, it is easy to verify that $\varkappa\circ\varsigma(\sigma_k) = |\sigma_k| = 1$.
Since $\varkappa\circ\varsigma$ and writhe agree on the generators of $B_n$, they must agree everywhere by induction, and so be the same.
\end{proof}
To fully characterize imGE, we begin with the special case of pure braids.

\begin{lemma}\label{lem-all-evensum-linkvecs-possible}
For any $\ell\in\mathbb{Z}^n$, there is a pure braid $\beta$ with $\ell_{\beta} = \ell$ iff $\ell$ has even sum.
\end{lemma}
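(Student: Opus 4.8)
The plan is to recast the statement through the homomorphism $\varsigma$ of Theorem~\ref{thm-factor-imGE}. Restricting $\varsigma$ to the pure braid group $P_n$ (the braids $\beta$ with $\pi_\beta=\mathrm{id}$), the image lies in the abelian subgroup $\{\mathrm{id}\}\times\mathbb{Z}^n$, so $\beta\mapsto\ell_\beta$ is a group homomorphism $P_n\to\mathbb{Z}^n$ and its image is a subgroup $L\le\mathbb{Z}^n$. The lemma then asserts that $L$ equals the even-sum sublattice $E:=\{v\in\mathbb{Z}^n:\sum_i v_i\equiv 0\ (\mathrm{mod}\ 2)\}$. First I would dispatch the inclusion $L\subseteq E$ (necessity): for a pure braid $\beta$, Lemma~\ref{lem-sum-link-writhe} gives $\sum_i(\ell_\beta)_i=|\beta|$, while $\pi_\beta=\mathrm{id}$ is an even permutation, so Theorem~\ref{thm-structure-imWP} forces $|\beta|$ to be even; hence $\sum_i(\ell_\beta)_i$ is even, i.e.\ $\ell_\beta\in E$.

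For the reverse inclusion $E\subseteq L$ (sufficiency) I would produce enough pure braids to generate $E$. The basic building block is $\sigma_k^2$: applying the generating relation of Definition~\ref{def-GE} twice (or computing with $\varsigma$) shows $\sigma_k^2$ is pure with $\ell_{\sigma_k^2}=\vec{e_k}+\vec{e_{k+1}}$. To reach non-adjacent pairs I would conjugate: if $\gamma$ is any braid with $\pi_\gamma=\rho$, then $\gamma\sigma_k^2\gamma^{-1}$ is again pure (its permutation is $\rho\,\mathrm{id}\,\rho^{-1}=\mathrm{id}$), and since conjugation in $\mathbb{Z}^n\rtimes S_n$ makes the permutation part act on the vector part by permuting coordinates, its vector is $\vec{e_{\rho(k)}}+\vec{e_{\rho(k+1)}}$. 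Because the permutation homomorphism $B_n\to S_n$ is surjective, $\rho$ ranges over all of $S_n$, so the support $\{\rho(k),\rho(k+1)\}$ ranges over every two-element subset of $[n]$; thus $\vec{e_i}+\vec{e_j}\in L$ for all $i\neq j$.

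It then remains to show that these vectors generate $E$, which I expect to be the crux of the argument. Since $L$ is a subgroup containing every $\vec{e_i}+\vec{e_j}$, it suffices to exhibit a basis of $E$ as integer combinations of them. For any three distinct indices $i,j,k$ the identity $(\vec{e_i}+\vec{e_j})+(\vec{e_i}+\vec{e_k})-(\vec{e_j}+\vec{e_k})=2\vec{e_i}$ places each $2\vec{e_i}$ in $L$, and then $\{2\vec{e_1}\}\cup\{\vec{e_1}+\vec{e_j}:2\le j\le n\}$ is a basis of $E$, since as rows it forms a lower-triangular matrix with determinant $2=[\mathbb{Z}^n:E]$ and all its members lie in $E$. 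Hence $L\supseteq E$, giving $L=E$ and the lemma. The main obstacle is precisely this lattice-generation step: the squares $\sigma_k^2$ by themselves span only the rank-$(n-1)$ sublattice $\langle\vec{e_k}+\vec{e_{k+1}}\rangle$, so the conjugation trick producing the non-adjacent $\vec{e_i}+\vec{e_j}$ (and through them the vectors $2\vec{e_i}$) is exactly what supplies the missing rank and realizes all of $E$. Note this argument requires $n\ge 3$ for the three-index identity; the case $n=2$ is degenerate and would need separate handling.
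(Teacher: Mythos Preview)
Your argument is correct, but the sufficiency direction differs from the paper's. The paper never uses conjugation; instead it exhibits two explicit families of pure braids, computes $\ell_{\sigma_k^2}=\vec{e_k}+\vec{e_{k+1}}$ and $\ell_{(\sigma_k\sigma_{k+1})^3}=2\vec{e_k}+2\vec{e_{k+1}}+2\vec{e_{k+2}}$, and then verifies by a direct linear-algebra identity that these consecutive-pair and consecutive-triple vectors already span the even-sum lattice. Your route instead leverages the semidirect-product structure: conjugating $\sigma_k^2$ by a braid of permutation $\rho$ yields $\vec{e_{\rho(k)}}+\vec{e_{\rho(k+1)}}$, hence \emph{all} pairwise sums $\vec{e_i}+\vec{e_j}$, and you then extract the $2\vec{e_i}$ via the three-index identity. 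What this buys you is a cleaner and more transparent lattice step (the basis $\{2\vec{e_1}\}\cup\{\vec{e_1}+\vec{e_j}\}$ is visibly triangular with determinant $2$), whereas the paper's computation with $(2,2,2)$-blocks is more ad hoc. Both approaches genuinely need $n\ge 3$---the paper's because $(\sigma_k\sigma_{k+1})^3$ requires two consecutive generators, yours because of the three-index identity---and your remark that $n=2$ is degenerate is apt: in fact the lemma is false as stated for $n=2$, since $P_2=\langle\sigma_1^2\rangle$ hits only the multiples of $(1,1)$, not e.g.\ $(2,0)$.
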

\begin{proof}
\textbf{Necessity}: Suppose that there exists such a pure braid $\beta$. Then $\sum_a\ell_a = |\beta|$, and the latter is even since $\beta$ is pure. Hence $\sum_a\ell_a$ must be pure, and we are done.

\textbf{Sufficiency}:
Let $\beta_1$ and $\beta_2$ be arbitrary pure braids. Then $\ell_{\beta_1\beta_2} = \ell_{\beta_1} + \ell_{\beta_2}$, and $\ell_{\beta_1^{-1}} = -\ell_{\beta_1}$.
Hence the set of all possible $\ell_{\beta}$ values for braids $\beta$ forms a lattice in $\mathbb{Z}^n$.

We compute that $\ell_{\sigma_k^2} = (0, \dots, 0, 1, 1, 0, \dots, 0)$, where the $1, 1$ is in the $k$th and $(k+1)$th places and where we have identified $\ell$ with a vector in $\mathbb{Z}^n$.
Similarly, we compute $\ell_{(\sigma_k\sigma_{k + 1})^3} = (0, \dots, 0, 2, 2, 2, 0, \dots, 0)$.

Since $(a, a + b - 2c, c) = (a - 2c)(1, 1, 0) + (b - 2c)(0, 1, 1) + c(2, 2, 2)$, we conclude that these vectors span the lattice of all elements of $\mathbb{Z}^n$ with even sum, and we are done.
\end{proof}

Then, we will combine this result with some further lemmas to characterize the image in Theorem \ref{thm-imGE}.
Now we can fully characterize imGE.

\begin{theorem}\label{thm-imGE}
Let $\pi\in S_n$ and $\ell\in\mathbb{Z}^n$.
Then there exists a braid $\beta$ such that $\pi_{\beta} = \pi, \ell_{\beta} = \ell$ iff the sum of $\ell$ has the same parity as $\pi$.
\end{theorem}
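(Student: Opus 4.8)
The plan is to reduce the statement to the special case of pure braids handled in Lemma \ref{lem-all-evensum-linkvecs-possible}, using the factorization through $\mathbb{Z}^n\rtimes S_n$ from Theorem \ref{thm-factor-imGE}. \textbf{Necessity} is immediate from the earlier lemmas: if such a braid $\beta$ exists, then by Lemma \ref{lem-sum-link-writhe} the sum of the components of $\ell=\ell_\beta$ equals the writhe $|\beta|$, and by the necessity half of Theorem \ref{thm-structure-imWP} the writhe and permutation of any braid share the same parity. Hence $\sum\ell=|\beta|$ has the same parity as $\pi_\beta=\pi$, which is exactly the forward implication.

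For \textbf{sufficiency} I would first match the permutation with an arbitrary braid and then correct the link vector using a pure braid. Since the permutation homomorphism $B_n\to S_n$ is surjective, choose any braid $\beta'$ with $\pi_{\beta'}=\pi$ and set $\ell':=\ell_{\beta'}$. Applying the necessity direction just proved to $\beta'$ gives that $\sum\ell'$ has the same parity as $\pi$, hence the same parity as $\sum\ell$ by hypothesis; therefore $\ell-\ell'$ has even sum. By Lemma \ref{lem-all-evensum-linkvecs-possible} there is then a pure braid $\gamma$ with $\ell_\gamma=\ell-\ell'$.

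I would then take $\beta:=\beta'\gamma$ and verify it carries the desired data. Its permutation is $\pi_{\beta'}\pi_\gamma=\pi\cdot\mathrm{id}=\pi$ because $\gamma$ is pure. For the link vector, apply $\varsigma$ and the group law $(\pi,\ell)(\pi',\ell')=(\pi\pi',\ell'+\pi'\ell)$ to $\varsigma(\beta')=(\pi,\ell')$ and $\varsigma(\gamma)=(\mathrm{id},\ell_\gamma)$: since the second factor has trivial permutation, the twisting term acts trivially and the rule collapses to ordinary vector addition, giving $\ell_\beta=\ell'+\ell_\gamma=\ell'+(\ell-\ell')=\ell$, as required. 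The one point demanding care is the bookkeeping with the semidirect-product convention: the correcting pure braid must be placed on the \emph{right} of $\beta'$, precisely so that its link vector is added without being permuted by $\pi$; placing it on the left would instead twist $\ell_\gamma$ by $\pi$ and complicate the solve. Beyond checking this convention, the argument is a direct assembly of the three cited lemmas, so I do not anticipate a genuine obstacle.
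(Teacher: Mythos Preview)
Your proof is correct and follows essentially the same route as the paper: prove necessity via Lemma~\ref{lem-sum-link-writhe} and Theorem~\ref{thm-structure-imWP}, then for sufficiency lift $\pi$ to any braid $\beta'$, use the same two results to see that $\ell-\ell_{\beta'}$ has even sum, invoke Lemma~\ref{lem-all-evensum-linkvecs-possible} to get a pure correcting braid, and multiply on the right. Your explicit check of the semidirect-product bookkeeping (and the remark that the pure braid must go on the right so its link vector is not twisted by $\pi$) is a point the paper leaves implicit.
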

\begin{proof}
\textbf{Necessity}: Suppose such a braid exists. By Lemma \ref{lem-sum-link-writhe}, the sum of $\ell = \ell_{\beta}$ must be the writhe of $\beta$.
This must have the same parity as $\pi_{\beta} = \pi$ by Theorem \ref{thm-structure-imWP}, and we are done.

\textbf{Sufficiency}: Suppose that the sum of $\ell$ has the same parity as $\pi$.
We can construct a braid $\beta_1$ with permutation $\pi$. By Theorem \ref{thm-structure-imWP} and Lemma \ref{lem-sum-link-writhe}, $\ell_{\beta_1}$ must have a sum with the same parity as $\pi$.
By Lemma \ref{lem-all-evensum-linkvecs-possible}, there is a pure braid $\beta_2$ such that $\ell_{\beta_2} = \ell - \ell_{\beta_1}$ (since the right hand side has even sum).
Then the braid $\beta_3 := \beta_1\beta_2$ has permutation $\pi$ and vector $\ell$, as desired, and we are done.
\end{proof}

\subsection{Kernel of GE}\label{subsec-kerGE}

\hspace{\parindent}It is natural to consider the kernel of the Gauss-Epple homomorphism.
We find that imWP is a quotient group of imGE, as kerWP is a supergroup of kerGE.

Since the image of GE is not $B_n$, the kernel of GE must necessarily be nontrivial.
In fact, by computational means, we found several explicit examples:
\begin{itemize}
    \item $\sigma_1^2\sigma_2^2\sigma_1^{-2}\sigma_2^{-2}$ (which corresponds to the Whitehead link \cite{WMW-WhiteheadLink});
    \item  $\sigma_1^{-1}\sigma_3^{-1}\sigma_2^2\sigma_3^{-1}\sigma_1^{-1}\sigma_2^2$;
    \item $(\sigma_1\sigma_2^{-1})^3$;
    \item $(\sigma_2\sigma_1^{-1})^3$;
    \item $\sigma_1\sigma_2^{-1}\sigma_1^2(\sigma_1\sigma_2^{-1})^2\sigma_2^{-2}$;
    \item $(\sigma_1\sigma_2\sigma_1^2\sigma_2^{-1})^2\sigma_1^{-2}$.
\end{itemize}

\hspace{\parindent}We can make several observations about the structure of kerGE: 
We know that the kernel of the Gauss-Epple action must be a subgroup of the group of pure braids, which in turn is a subgroup of the braid group $B_n$.
Since $B_n$ has no torsion, the Gauss-Epple kernel must also have no torsion and be infinite.

\subsubsection{Random braids}

\hspace{\parindent}We study the probability that a random braid of $n$ generators lies in the kernel of the Gauss-Epple action.

Let $G$ be an arbitrary group and $H$ an arbitrary normal subgroup. 
Then, for all $g\in G$, we know that $g\in H$ iff $\phi(g) = e$, where $\phi$ is the quotient map $G\to G/H$.
(This effectively reduces our problem to studying the quotient group $K := G/H$.)
Now, defining $V(N)$ to be the number of elements of $K$ that can be produced from words of length $N$, we have the following deep theorem:
\begin{theorem}\label{thm:randwalkprob-vertcount-asy} \cite{alexopoulos1997convolution, RWIGGreview, AMSNotice200109}
For any $d$, the probability of a random walk on $K$ returning to the identity on the $N$th step is on the order of $N^{-d/2}$ iff $V(N)$ is comparable to $N^d$.
\end{theorem}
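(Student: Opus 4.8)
The plan is to recognize this as a deep theorem from the theory of random walks on finitely generated groups and to assemble the two-sided heat-kernel estimate that underlies it. Write $p_{2N}(e,e)$ for the probability that the symmetric nearest-neighbor walk on the Cayley graph of $K$ returns to the identity at step $2N$. The central comparison is $p_{2N}(e,e) \asymp 1/V(\sqrt{N})$, where $\asymp$ denotes equality up to multiplicative constants. Granting this, the stated equivalence is immediate: if $V(N) \asymp N^d$ then $V(\sqrt{N}) \asymp N^{d/2}$, so $p_{2N}(e,e) \asymp N^{-d/2}$; conversely, the decay rate of $p_{2N}(e,e)$ determines the growth of $V(\sqrt{N})$ and hence of $V(N)$.

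For the forward direction, in which volume growth controls the return probability, I would first note that $V(N) \asymp N^d$ is exactly polynomial growth of degree $d$, so by Gromov's theorem $K$ is virtually nilpotent and $d$ is the integer given by the Bass--Guivarc'h formula. The upper bound on $p_{2N}(e,e)$ then follows from a Nash-type functional inequality: polynomial volume growth yields a Faber--Krahn isoperimetric inequality on the Cayley graph, which bounds the on-diagonal heat-kernel decay by a standard Nash--Moser iteration. For the matching lower bound I would use $p_{2N}(e,e) = \sum_{x} p_N(e,x)^2$ and apply Cauchy--Schwarz over the ball $B$ of radius comparable to $\sqrt{N}$; since a Carne--Varopoulos Gaussian estimate confines the walk to $B$ with probability bounded below, this gives $p_{2N}(e,e) \gtrsim 1/|B| \asymp 1/V(\sqrt{N})$.

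The reverse direction is the main obstacle. Here one is handed the return-probability decay and must recover the volume growth, and the difficulty is that return probability is an analytic, spectral quantity while the volume count is purely combinatorial. Bridging them requires the full Varopoulos dictionary: one shows that on-diagonal heat-kernel decay is equivalent to a Sobolev--Nash inequality, and then extracts the isoperimetric profile, and thus the volume growth, from that inequality. Since this delicate equivalence, and the resulting correspondence between $N^{-d/2}$ decay and $N^d$ growth, is precisely the content of the cited references \cite{alexopoulos1997convolution, RWIGGreview, AMSNotice200109}, I would in the paper treat the theorem as a black box and defer its proof to that literature rather than reproduce the analysis.
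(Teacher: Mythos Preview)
Your proposal is correct and aligns with the paper: the paper gives no proof of this theorem at all, simply stating it as a ``deep theorem'' with the citations \cite{alexopoulos1997convolution, RWIGGreview, AMSNotice200109} and then applying it as a black box, exactly as you conclude one should do. Your sketch of the underlying analysis (Gromov, Nash--Moser, Carne--Varopoulos, the Varopoulos dictionary) goes well beyond what the paper provides and is accurate background, but for the purposes of matching the paper it suffices that you defer to the cited literature.
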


To apply this theorem, we take $G = B_n, H = \ker GE, K = \im GE$.
Since $V(N)$ is comparable to $N^n$ thanks to Theorem \ref{thm-imGE}, we conclude by Theorem \ref{thm:randwalkprob-vertcount-asy} that the probability without filtering of an element being in the kernel of Gauss-Epple is asymptotically on the order of $N^{-n/2}$, which is the result we seek.

\section{The symmetric Gauss-Epple homomorphism}\label{sec-symGE}

\hspace{\parindent}In his paper \cite{Epple98}, Epple introduced yet another action of the braid group, which he called the \textit{symmetric Gauss-Epple homomorphism}. This object is defined as follows:

\begin{definition}[Symmetric Gauss-Epple action]
For any $n\in\mathbb{N}$, the symmetric Gauss-Epple action $symGE: B_n\times \mathbb{Z}^2\to \mathbb{Z}^2$ is the unique left group action of $B_n$ (with canonical Artin generators $\sigma_1, \sigma_2, \dots$) on $\mathbb{Z}^2$ defined by the following relation:

$$\forall k\in[n], a, b\in\mathbb{Z}:
\symGE(\sigma_k, (a, b)) = \begin{cases}
(a, b) & a\notin\{k, k + 1\}\\
(k + 1, b + 1) & a = k\\
(k, b + 1) & a = k + 1
\end{cases}.$$
\end{definition}

For similar reasons as in the proof of Theorem \ref{thm-factor-imGE}, the symmetric Gauss-Epple action (which we abbreviate symGE) also factors through $\mathbb{Z}^n\rtimes S_n$ into $\Sym(\mathbb{Z}^2)$:

\begin{theorem}[Structure theorem for imGE]\label{thm-factor-imsymGE}
Define the group inclusion $\iota: \mathbb{Z}^n\rtimes S_n\to\Sym([n]\times\mathbb{Z})$ by the equation $(\pi, \ell)\to ((a, b)\to (\pi(a), b + \ell(a))$,
and define the group surjection $\varpi: B_n\to\mathbb{Z}^n\rtimes S_n$ by 
$$\varpi(\sigma_k) := (\vec{e_k} + \vec{e_{k + 1}}, (k, k + 1)).$$
Then $symGE = \iota\circ\varpi$.
\end{theorem}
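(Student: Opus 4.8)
The plan is to follow the proof of Theorem~\ref{thm-factor-imGE} verbatim in structure, reducing the global identity $\symGE = \iota\circ\varpi$ to a verification on the Artin generators. Writing $\varpi(\sigma_k) = ((k,k+1),\,\vec{e_k}+\vec{e_{k+1}})$ and unwinding the definition of $\iota$, which sends $(\pi,\ell)$ to the permutation $(a,b)\mapsto(\pi(a),\, b+\ell(a))$, I would compute $\iota(\varpi(\sigma_k))$ in the three cases $a\notin\{k,k+1\}$, $a=k$, and $a=k+1$. In these cases the transposition $(k,k+1)$ fixes $a$, sends $k\mapsto k+1$, and sends $k+1\mapsto k$, while the vector $\vec{e_k}+\vec{e_{k+1}}$ contributes $0$, $1$, and $1$ to the second coordinate respectively; this reproduces exactly the three branches of the defining relation for $\symGE(\sigma_k)$. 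This generator check is the computational heart of the argument and is short.

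Before the ``agree on generators'' principle can be applied, I need $\varpi$ to be a bona fide homomorphism, which amounts to a single braid-relation verification. I would carry this out on the action side rather than inside the semidirect product: exactly as in Lemma~\ref{lem-GE-welldefined}, one checks that $\symGE$ respects $\sigma_k\sigma_l=\sigma_l\sigma_k$ for $|k-l|\geq 2$ and $\sigma_k\sigma_{k+1}\sigma_k=\sigma_{k+1}\sigma_k\sigma_{k+1}$, so that $\symGE$ is a well-defined homomorphism $B_n\to\Sym([n]\times\mathbb{Z})$. Since $\iota$ is injective, well-definedness of $\symGE$ then forces well-definedness of $\varpi$: for any braid relation $w_1=w_2$, the word-level equalities $\iota(\varpi(w_i))=\symGE(w_i)$ (which hold because both sides are built multiplicatively from matching generator values) give $\iota(\varpi(w_1))=\symGE(w_1)=\symGE(w_2)=\iota(\varpi(w_2))$, and injectivity cancels $\iota$ to yield $\varpi(w_1)=\varpi(w_2)$. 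Thus $\varpi$ descends to $B_n$ with no separate computation in $\mathbb{Z}^n\rtimes S_n$.

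With $\iota\circ\varpi$ and $\symGE$ now both homomorphisms $B_n\to\Sym([n]\times\mathbb{Z})$ that agree on the generating set $\{\sigma_k\}$, they agree on all of $B_n$ by the same induction on word length used for $GE$. The only step demanding genuine care is keeping the twisted group law $(\pi,\ell)(\pi',\ell')=(\pi\pi',\,\ell'+\pi'\ell)$ and the induced action $(\pi'\ell)(a)=\ell(\pi'(a))$ consistent with $\iota$ being a \emph{left} action; getting this direction right is what makes both the generator computation and the inherited well-definedness go through, and it is precisely the place where a composition-order slip would break the proof. Because the injectivity-of-$\iota$ route transports the one nontrivial relation check onto the already-understood action $\symGE$, the main obstacle essentially collapses to the routine three-case generator computation above.
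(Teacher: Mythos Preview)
Your proposal is correct and follows essentially the same approach as the paper, whose entire proof is the single word ``Trivial'' (implicitly invoking the generator-check-plus-induction argument already given for Theorem~\ref{thm-factor-imGE}). If anything you are more scrupulous than the paper: you explicitly justify that $\varpi$ is well-defined via injectivity of $\iota$ and well-definedness of $\symGE$, whereas the paper simply declares $\varpi$ a homomorphism in the statement and moves on.
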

\begin{proof}
Trivial.
\end{proof}

We then note that the symmetric Gauss-Epple action and the Gauss-Epple action have the same kernel:

\begin{theorem}
The kernel of the symmetric Gauss-Epple action is the same as the kernel of the ordinary Gauss-Epple action.
\end{theorem}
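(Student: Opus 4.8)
The plan is to avoid any direct computation inside $\Sym([n]\times\mathbb{Z})$ and instead compare the two factoring maps $\varsigma$ and $\varpi$ of Theorems \ref{thm-factor-imGE} and \ref{thm-factor-imsymGE}. Since the inclusion $\iota:\mathbb{Z}^n\rtimes S_n\to\Sym([n]\times\mathbb{Z})$ is injective, those theorems give $\ker GE=\ker\varsigma$ and $\ker symGE=\ker\varpi$, so it suffices to prove $\ker\varsigma=\ker\varpi$ as subgroups of $B_n$. Throughout I fix the (permutation, vector) ordering consistent with the stated group law $(\pi,\ell)(\pi',\ell')=(\pi\pi',\ell'+\pi'\ell)$, writing $\varsigma(\sigma_k)=((k,k+1),\vec{e}_k)$ and $\varpi(\sigma_k)=((k,k+1),\vec{e}_k+\vec{e}_{k+1})$. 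Note first that projecting either map to the $S_n$ factor sends $\sigma_k\mapsto(k,k+1)$, so both recover the braid permutation $\pi_\beta$; conceptually this already forces every kernel element to be pure, and the whole question reduces to comparing the two $\mathbb{Z}^n$-valued vectors on the pure braid group $P_n$.

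The heart of the argument is a single global identity. Let $D:\mathbb{Z}^n\rtimes S_n\to\mathbb{Z}^n\rtimes S_n$ be the vector-doubling map $(\pi,\ell)\mapsto(\pi,2\ell)$; because the $S_n$-action on $\mathbb{Z}^n$ is linear, $D$ is an endomorphism, and it is injective since $\mathbb{Z}^n$ is torsion-free. Let $c$ denote conjugation by $(e,\vec{w})$ for $\vec{w}=(n,n-1,\dots,1)$, an inner automorphism satisfying $c(\pi,\ell)=(\pi,\ell+\pi\vec{w}-\vec{w})$. I claim that $\varpi=c\circ D\circ\varsigma$. Both sides are group homomorphisms $B_n\to\mathbb{Z}^n\rtimes S_n$, so it is enough to check agreement on each generator $\sigma_k$. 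This reduces to the identity $(k,k+1)\vec{w}-\vec{w}=\vec{e}_{k+1}-\vec{e}_k$, which holds precisely because consecutive entries of $\vec{w}$ differ by $1$; indeed $c(D(\varsigma(\sigma_k)))=((k,k+1),\,2\vec{e}_k+\vec{e}_{k+1}-\vec{e}_k)=((k,k+1),\vec{e}_k+\vec{e}_{k+1})=\varpi(\sigma_k)$. Since $c\circ D$ is a composite of injective maps it is injective, whence $\ker\varpi=\ker(c\circ D\circ\varsigma)=\ker\varsigma$, and combined with the first paragraph this yields $\ker symGE=\ker GE$.

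I expect the main obstacle to be organizational rather than mathematical: keeping the semidirect-product conventions straight (the two theorems as written differ in whether they list the vector or the permutation first), and verifying the generator-level identity with the correct signs. The one structural point worth isolating is that $D$ really is a homomorphism, which uses only the linearity of the $S_n$-action. The identity also makes the intuition transparent: restricted to $P_n$, where $\pi_\beta=e$ and the coboundary $\pi\vec{w}-\vec{w}$ vanishes, it simply says $\varpi|_{P_n}=2\,\varsigma|_{P_n}$ as maps into $\mathbb{Z}^n$, so equality of kernels is immediate from torsion-freeness. No input beyond Theorems \ref{thm-factor-imGE} and \ref{thm-factor-imsymGE} is required.
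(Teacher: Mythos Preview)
Your proof is correct and takes a genuinely different, cleaner route than the paper's. The paper restricts attention to $P_n$ and verifies on the pure-braid generators $A_{i,j}$ that the vector parts of $\varsigma$ and $\varpi$ differ by a factor of $2$; this requires knowing a generating set for $P_n$ and a separate iterative computation of $GE(A_{i,j})$ and $\symGE(A_{i,j})$. You instead establish the global identity $\varpi=c\circ D\circ\varsigma$ on all of $B_n$, checked on the Artin generators $\sigma_k$, and then read off $\ker\varpi=\ker\varsigma$ from the injectivity of $c\circ D$. Your approach avoids any appeal to the structure of $P_n$, explains \emph{why} the factor of $2$ appears on pure braids (the coboundary $\pi\vec w-\vec w$ vanishes there), and in fact yields more: an explicit intertwining of the two homomorphisms rather than just equality of kernels. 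The only care needed---which you flag---is keeping the semidirect-product conventions consistent; with the group law $(\pi,\ell)(\pi',\ell')=(\pi\pi',\ell'+\pi'\ell)$ your conjugation formula $c(\pi,\ell)=(\pi,\ell+\pi\vec w-\vec w)$ and the generator check $(k,k{+}1)\vec w-\vec w=\vec e_{k+1}-\vec e_k$ are both correct.
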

\begin{proof}
Suppose $\beta$ is an arbitrary pure braid. We show that $GE(\beta) = (\ell, \text{id}) \leftrightarrow \symGE(\beta) = (2\ell, \text{id})$.
It is clear that the ``permutation parts'' are all identity, so we only focus on the ``vector parts''.
It suffices to then show this claim for the the generators of the pure braid group $P_n$.
Note that $P_n$ (the pure braid group on $n$ strands) is generated by $A_{i, j} := \sigma_{j-1}\dots\sigma_{i+1}\sigma_i^2\sigma_{i+1}^{-1}\dots\sigma_{j-1}^{-1}$ \cite{Suciu16}; for example, the generators for $P_3$ are $\sigma_1^2, \sigma_2^2, \sigma_1\sigma_2^2\sigma_1^{-1}.$

We compute iteratively that $GE(A_{i, j}) = (\vec{e_i} + \vec{e_j}, \text{id}), \symGE(A_{i, j}) = (2\vec{e_i} + 2\vec{e_j}, \text{id})$. Hence the claim holds, and we are done.
\end{proof}

\begin{remark}
By the First Isomorphism Theorem, the previous theorem implies that the images of the symmetric Gauss-Epple action and the images of the regular Gauss-Epple action must be isomorphic as groups.
\end{remark}

\section{The super-Gauss-Epple homomorphism}\label{sec-superGE}

\hspace{\parindent}We define another homomorphism, this time of type signature $B_n\to \mathbb{Z}^{n(n-1)}\rtimes S_n$, which we name the \textit{super-Gauss-Epple homomorphism} (abbreviated SGE).
More precisely, define $O_{i, j}$ to be the matrix with a 1 entry at the $(i, j)$th place and 0 entries everywhere else. 
Then we define SGE as follows:
\begin{definition}[super-Gauss-Epple homomorphism]\label{def-SGE}
The super-Gauss-Epple homomorphism $SGE: B_n\to \mathbb{Z}^{n(n-1)}\rtimes S_n$ is defined by the following equation:

$$SGE(\sigma_i) := (O_{i, i + 1}, (i, i + 1)).$$
\end{definition}

(Here, by slight abuse of notation, we consider elements of $\mathbb{Z}^{n(n-1)}$ to be $n\times n$ matrices with all zeros along the diagonal.)
An example calculation of SGE is depicted in Figure \ref{fig-calc-SGE}.

\begin{figure}[h]
    \begin{center}\includegraphics[width=0.6\textwidth]{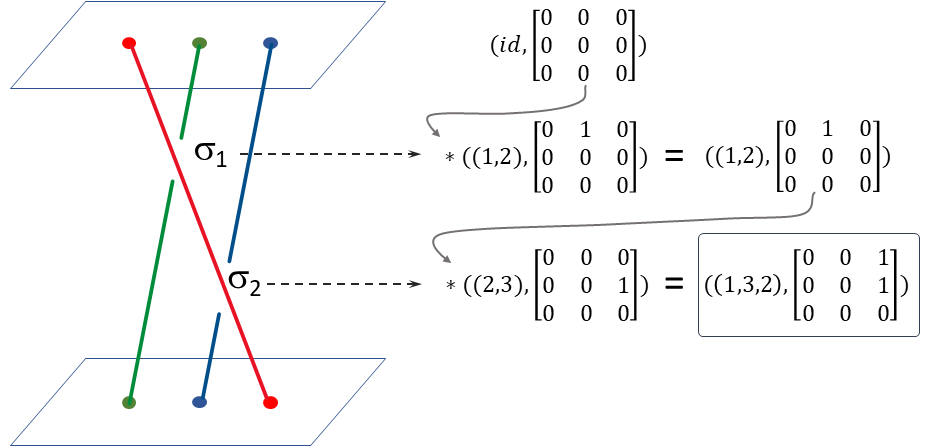}\end{center} 
    \caption{An example calculation of SGE.}
    \label{fig-calc-SGE}
\end{figure}

To verify that this is indeed a homomorphism, we verify the braid relations (as is sufficient and necessary):

\begin{align*}
    SGE(\sigma_i\sigma_{i+1}\sigma_i) &= SGE(\sigma_i)\cdot SGE(\sigma_{i+1})\cdot SGE(\sigma_i)\\
    &= (O_{i, i + 1}, (i, i + 1))\cdot(O_{i + 1, i + 2}, (i + 1, i + 2))\cdot(O_{i, i + 1}, (i, i + 1))\\
    &= (O_{i, i + 1} + (i, i+1)\cdot O_{i + 1, i + 2} + (i, i+1)(i+1, i+2)\cdot O_{i, i + 1}, \\
    &\hspace{15pt}(i, i+1)(i+1, i+2)(i, i+1))\\
    &= (O_{i, i + 1} + O_{i, i + 2} + O_{i + 1, i + 2},(i, i+1)(i+1, i+2)(i, i+1))\\
    &= (O_{i, i + 1} + O_{i, i + 2} + O_{i + 1, i + 2}, (i, i+2, i+1)),
\end{align*}
and
\begin{align*}
    SGE(\sigma_{i+1}\sigma_i\sigma_{i+1}) &= SGE(\sigma_{i+1})SGE(\sigma_i)SGE(\sigma_{i+1})\\
    &= (O_{i + 1, i + 2}, (i + 1, i + 2))\cdot(O_{i, i + 1}, (i, i + 1))\cdot(O_{i + 1, i + 2}, (i + 1, i + 2))\\
    &= (O_{i + 1, i + 2} + (i+1, i+2)\cdot O_{i, i + 1} + (i+1, i+2)(i, i+1)\cdot O_{i + 1, i + 2},\\
    &\hspace{15pt}(i+1, i+2)(i, i+1)(i+1, i+2))\\
    &= (O_{i, i + 1} + O_{i, i + 2} + O_{i + 1, i + 2},\\
    &\hspace{15pt}(i, i+1)(i+1, i+2)(i, i+1))\\
    &= (O_{i, i + 1} + O_{i, i + 2} + O_{i + 1, i + 2}, (i, i+2, i+1)),
\end{align*}
as desired.

A braid's image under the super-Gauss-Epple homomorphism stores at least as much information about it as the ordinary Gauss-Epple homomorphism. Note that this is strictly more information, as we can give the explicit example $\sigma_2^2\sigma_3^{-1}\sigma_2\sigma_1^2\sigma_2^{-1}\sigma_3^{-1}\sigma_1^{-2}$.

(By the Third Isomorphism Theorem, this also follows iff the image of the super-Gauss-Epple homomorphism is not isomorphic to that of the ordinary Gauss-Epple homomorphism.)

It is to be noted that the kernel of SGE is still nontrivial.
For example, it contains the braid $(\sigma_1\sigma_2^{-1})^3$, which was mentioned before as being a nontrivial example of the kernel of regular GE.

Hence, we have the following commutative diagram:

\[\begin{tikzcd}
	&& {B_n/[P_n, P_n]} \\
	{B_n} && {P_n^{ab}\rtimes S_n} & {\Sym(\mathbb{Z}\times\binom{n}{2})} \\
	&& {\mathbb{Z}^n\rtimes S_n} & {\Sym(\mathbb{Z}\times[n])} \\
	&& {\mathbb{Z}\times S_n}
	\arrow[from=1-3, to=2-3]
	\arrow[from=2-3, to=3-3]
	\arrow[from=3-3, to=4-3]
	\arrow["UGE", from=2-1, to=1-3]
	\arrow["SGE", from=2-1, to=2-3]
	\arrow["GE", from=2-1, to=3-3]
	\arrow["WP", from=2-1, to=4-3]
	\arrow[from=2-3, to=2-4]
	\arrow[from=3-3, to=3-4]
\end{tikzcd}\]

\subsection{A 1-cocycle}\label{section-braids-1cocy}

\hspace{\parindent}Firstly, note that the matrix of (the super-Gauss-Epple homomorphism of) a pure braid is always symmetric:

\begin{lemma}\label{lem-SGE-pure-symmetric}
Let $\beta\in P_n$, and suppose that $SGE(\beta) = (M, \text{id})$. Then $M$ is a symmetric matrix.
\end{lemma}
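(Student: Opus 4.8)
The plan is to reduce the statement to a computation on a generating set of $P_n$, exploiting the fact that the semidirect-product twist disappears once the permutation part is trivial. First I would observe that, since the second component of $SGE$ is exactly the braid permutation, the restriction of $SGE$ to $P_n$ lands in the subgroup $\mathbb{Z}^{n(n-1)}\times\{\mathrm{id}\}$, on which the group law reads $(M,\mathrm{id})(M',\mathrm{id})=(M+M',\mathrm{id})$ because the identity permutation acts trivially. Hence $\beta\mapsto M$, where $SGE(\beta)=(M,\mathrm{id})$, is a genuine group homomorphism from $P_n$ into the additive group of integer matrices with zero diagonal. Since the symmetric matrices form a subgroup of this additive group (being closed under addition and negation), it suffices to check that each element of a generating set of $P_n$ maps to a symmetric matrix.

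I would use the standard generators $A_{i,j}=\sigma_{j-1}\cdots\sigma_{i+1}\,\sigma_i^2\,\sigma_{i+1}^{-1}\cdots\sigma_{j-1}^{-1}$ already cited in the excerpt. The base case is $\sigma_i^2$: a direct multiplication in the semidirect product gives $SGE(\sigma_i^2)=(O_{i,i+1}+(i,i+1)\cdot O_{i,i+1},\mathrm{id})=(O_{i,i+1}+O_{i+1,i},\mathrm{id})$, whose matrix is manifestly symmetric. For the general generator I would write $A_{i,j}=w\,\sigma_i^2\,w^{-1}$ with $w=\sigma_{j-1}\cdots\sigma_{i+1}$, set $SGE(w)=(N,\tau)$ where $\tau=\pi_w$, and carry out the conjugation in the semidirect product. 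Using $(N,\tau)^{-1}=(-\tau^{-1}\cdot N,\tau^{-1})$, the $N$-terms cancel and one obtains $SGE(A_{i,j})=(\tau\cdot S,\mathrm{id})$, where $S=O_{i,i+1}+O_{i+1,i}$.

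It then remains to note that the $S_n$-action on matrices appearing here is the simultaneous row/column permutation $\pi\cdot M=P_\pi M P_\pi^{\mathsf T}$, read off from the relation $\pi\cdot O_{a,b}=O_{\pi(a),\pi(b)}$ used in the braid-relation computation above. This action preserves symmetry, since $(P_\pi M P_\pi^{\mathsf T})^{\mathsf T}=P_\pi M^{\mathsf T} P_\pi^{\mathsf T}=P_\pi M P_\pi^{\mathsf T}$ whenever $M=M^{\mathsf T}$. Therefore $\tau\cdot S$ is symmetric, each $A_{i,j}$ lands in the symmetric sublattice, and by the homomorphism reduction of the first paragraph every pure braid does too.

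The only real obstacle is bookkeeping: getting the semidirect-product conjugation formula and the direction of the $S_n$-action exactly right, so that the $N$-terms genuinely cancel and leave the symmetric matrix $\tau\cdot S$. Once the conjugation identity $SGE(w\,\sigma_i^2\,w^{-1})=(\tau\cdot S,\mathrm{id})$ is verified, the symmetry claim is immediate and requires no case analysis. (An alternative, fully hands-on route would compute $\tau\cdot S$ explicitly as $O_{i,j}+O_{j,i}$ and check symmetry directly, but the conjugation argument avoids having to track the precise value of $\tau$.)
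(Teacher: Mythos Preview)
Your proposal is correct and follows essentially the same route as the paper: reduce to the standard generators $A_{i,j}$ of $P_n$ using that the matrix part is additive on pure braids, then verify that each $A_{i,j}$ has symmetric matrix. The paper simply asserts the iterative computation $SGE(A_{i,j})=(O_{i,j}+O_{j,i},\mathrm{id})$, whereas you reach the same conclusion via the conjugation identity $SGE(w\,\sigma_i^2\,w^{-1})=(\tau\cdot S,\mathrm{id})$ together with the observation that the $S_n$-action preserves symmetry; this is a minor packaging difference, not a different argument.
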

\begin{proof}
Recall that $P_n$ (the pure braid group on $n$ strands) is generated by 
$$A_{i, j} := \sigma_{j-1}\dots\sigma_{i+1}\sigma_i^2\sigma_{i+1}^{-1}\dots\sigma_{j-1}^{-1}$$ \cite{Suciu16}; for example, the generators for $P_3$ are $\sigma_1^2, \sigma_2^2, \sigma_1\sigma_2^2\sigma_1^{-1}.$

Note that $SGE(A_{i, j}) = (O_{i, j} + O_{j, i}, id)$, and the right component is clearly symmetric.
Since SGE matrices add for pure braids, the conclusion holds by induction.
\end{proof}

Hence, the difference between the upper half and the (transposed) lower half is always only determined by the braid permutation.

Now, let $F: B_n\to M_{n, n}(\mathbb{Z})$ be defined so that $SGE(\beta) = (M, s_{\beta}) \implies F(\beta) = M - M^T$.
Then $F(\beta) = 0\forall\beta\in P_n$.
Furthermore, $F$ satisfies the equality $F(\beta\gamma) = F(\beta) + M_{s_{\beta}}F(\gamma)M_{s_{\beta}}^{-1}$.
Hence $F(\pi\beta) = F(\beta) + id\cdot F(\beta) = F(\beta)~\forall\beta\in B_n, \pi\in P_n$, so that the value of $F(\beta)$ only depends on the permutation of the braid $\beta$, and that in general $F$ projects down to a function $\overline{F}: S_n\to M_{n, n}(\mathbb{Z})$.
Furthermore, we have $\overline{F}(\beta\gamma) = F(\beta) + M_{s_{\beta}}F(\gamma)M_{s_{\beta}}^{-1}$ (which we shall abbreviate as $\overline{F}(\beta\gamma) = \overline{F}(\beta) + \beta\cdot\overline{F}(\gamma)$. 
Therefore, $\overline{F}$ is a 1-cocycle of the action of $S_n$ on $M_{n, n}(\mathbb{Z})$, and in fact of the action on the set of antisymmetric $n\times n$ matrices over the integers.

While computing values of $\overline{F}$ for some random braids, we noticed that the entries in the upper half were always in $\{0, 1\}$. We prove this inductively as follows:

\begin{theorem}\label{thm-overlineF-upperhalf-zeroone}
Let $\varrho$ be an arbitrary permutation, and $i, j$ be integers with $i > j$. Then $\overline{F}(\varrho)[i, j]\in\{0, 1\}$.
\end{theorem}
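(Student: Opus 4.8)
The plan is to prove the statement directly for the half of the matrix named in the theorem, by induction on the Coxeter length $\ell(\varrho)$ (the minimal number of adjacent transpositions needed to write $\varrho$), carrying as inductive hypothesis the full conclusion that $\overline{F}(\varrho)$ is antisymmetric with every entry in $\{-1,0,1\}$, every entry $[i,j]$ with $i>j$ in $\{0,1\}$, and hence (by antisymmetry) every entry with $i<j$ in $\{-1,0\}$. The antisymmetry is free, since $\overline{F}$ was already shown above to take values in the antisymmetric matrices. The base case $\varrho=\mathrm{id}$ is immediate, as $\overline{F}(\mathrm{id})=0$. For the inductive step I would pick a left descent of $\varrho$, i.e.\ an adjacent transposition $s_k=(k,k+1)$ with $\ell(s_k\varrho)=\ell(\varrho)-1$ (one exists whenever $\varrho\neq\mathrm{id}$), write $\varrho=s_k\varrho'$ with $\ell(\varrho')<\ell(\varrho)$, and expand using the cocycle relation $\overline{F}(\varrho)=\overline{F}(s_k)+s_k\cdot\overline{F}(\varrho')$, where $s_k\cdot N = M_{s_k}NM_{s_k}^{-1}$.

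The key structural observation is that conjugation by the adjacent transposition $s_k$ acts on $\overline{F}(\varrho')$ simply by interchanging the indices $k$ and $k+1$ in both the row and the column position; concretely $(s_k\cdot N)[i,j]=N[s_k(i),s_k(j)]$. For a position $[i,j]$ with $i>j$ whose index set $\{i,j\}$ is not $\{k,k+1\}$, the relabelled position $[s_k(i),s_k(j)]$ again satisfies $s_k(i)>s_k(j)$, so by the inductive hypothesis its value still lies in $\{0,1\}$; moreover $\overline{F}(s_k)$ is supported only on the single transpose-pair $\{(k+1,k),(k,k+1)\}$, so it does not affect these entries at all. Thus every entry $[i,j]$ of $\overline{F}(\varrho)$ with $i>j$ and $\{i,j\}\neq\{k,k+1\}$ automatically lies in $\{0,1\}$, and the whole problem collapses to controlling the single named-half entry at $(k+1,k)$.

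For that entry I would compute it directly rather than via antisymmetry: $\overline{F}(\varrho)[k+1,k]=\overline{F}(s_k)[k+1,k]+\overline{F}(\varrho')[s_k(k+1),s_k(k)]=\overline{F}(s_k)[k+1,k]+\overline{F}(\varrho')[k,k+1]$. By the definition of $SGE$ in Definition \ref{def-SGE} and of $F$, the generator term $\overline{F}(s_k)$ carries the value $+1$ on the named-half member $(k+1,k)$ of its support pair (and $-1$ on its transpose), while $\overline{F}(\varrho')[k,k+1]$ lies in the opposite half and so, by the inductive hypothesis, belongs to $\{-1,0\}$. Hence $\overline{F}(\varrho)[k+1,k]=1+\overline{F}(\varrho')[k,k+1]\in\{0,1\}$, completing the induction. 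The point I want to stress is that this value is produced as $1$ plus a quantity known to lie in $\{-1,0\}$, so it lands in $\{0,1\}$ on the nose; this is exactly the content that cannot be recovered by transposing a result about the $i<j$ half, since that route only yields the opposite sign range $\{-1,0\}$.

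The step I expect to be the only real obstacle is the sign bookkeeping in the third paragraph, namely verifying that the generator $\overline{F}(s_k)$ contributes $+1$ (not $-1$) to the named half and that the partner entry $\overline{F}(\varrho')[k,k+1]$ is genuinely constrained to $\{-1,0\}$; both hinge on the antisymmetry of $\overline{F}$ and on the orientation conventions fixed in Definition \ref{def-SGE}, so I would pin these down carefully before writing the step. As an independent cross-check of the whole argument, one can verify the resulting closed form: unwinding the cocycle along a reduced word shows that $\overline{F}(\varrho)[i,j]=1$ (for $i>j$) precisely when $\{i,j\}$ records an inversion of $\varrho$ in the appropriate sense, each inversion being contributed exactly once because the reflections read off a reduced word occur without repetition, which is what rules out any accumulation beyond $1$.
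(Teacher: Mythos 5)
Your proposal is correct and follows essentially the same route as the paper's own proof: induction on Coxeter length via a left-descent decomposition $\varrho = s_k\varrho'$, the cocycle relation $\overline{F}(s_k\varrho') = \overline{F}(s_k) + s_k\cdot\overline{F}(\varrho')$, the observation that conjugation merely relabels indices so only the $\{k,k+1\}$ entry needs attention, and the key computation $1 + \overline{F}(\varrho')[\text{transposed entry}] = 1 - \overline{F}(\varrho')[\text{named entry}] \in \{0,1\}$, which is exactly the paper's step written through antisymmetry. The sign-orientation issue you flag is genuine but is an inconsistency in the paper itself (the theorem statement says $i > j$ while the paper's proof computes the $[\iota, \iota+1]$ entry with $\iota < \iota+1$), so your argument is sound once the ``named half'' is fixed to be the one on which the generators contribute $+1$.
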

\begin{proof}
We assume this for some $\varrho$ and prove it for $\tau\varrho$, where $\tau := (\iota, \iota + 1)$ is a transposition.

Since $\overline{F}(\tau\varrho) = \overline{F}(\tau) + \tau\cdot\overline{F}(\varrho)t$, the conjecture is instantly verified for all $i, j$ except for $\iota, \iota + 1$ (since the first term is trivially zero and the second term is in $\{0, 1\}$ by the inductive hypothesis).
In the special case $i = \iota, j = \iota + 1$, we compute \begin{align*}
\overline{F}(\tau\varrho)[\iota, \iota + 1] &= \overline{F}(\tau)[\iota, \iota + 1] + (\tau\cdot\overline{F}(\varrho)[\iota, \iota]\\
&= 1 - \overline{F}(\varrho)[\iota, \iota + 1]\\
&\in\{0, 1\}.
\end{align*}
(Here, we use the inductive hypothesis $\overline{F}(\varrho)[\iota, \iota + 1]\in\{0, 1\}$.)
Hence the claim holds.

By induction on the minimum number of simple transpositions required to be multiplied together to represent an arbitrary permutation, we are then done.
\end{proof}

This means that the image of the super-Gauss-Epple homomorphism is a group with $\mathbb{Z}^{\frac{n(n-1)}{2}}$ as a normal subgroup and $S_n$ as the quotient group.
Note that it is \textit{not} a semidirect product, since no braid with permutation $(1,2)$ has a square in the kernel of the super-Gauss-Epple homomorphism.
(Indeed, suppose such a braid $\beta = \sigma_1\varpi$ existed, and let $SGE(\beta) = (M, (1, 2))$ for some matrix $M$. Then the matrix of $SGE(\beta^2)$ is $M + (1, 2)\cdot M$. But since $M[1, 2]$ is one more than $((1, 2)\cdot M)[1, 2] = M[2, 1]$ and both are integers, the two cannot sum to zero, which is a contradiction.)

\section{Artin groups of crystallographic type}\label{sec-ATgroupsWeyl}

\hspace{\parindent}In this section, we explore generalizations of the Gauss-Epple homomorphism to the setting of Artin groups, which naturally project to corresponding Coxeter groups (generalizing how the braid group projects to the symmetric group).

\subsection{Special cases}

\subsubsection{The $I_2(4), I_2(6)$ cases}

\hspace{\parindent}This are special cases of the \textit{dihedral Artin groups}, which are the Artin groups of type $I_2(2n)$ for integers $n$.
The abelianization is $\mathbb{Z}^2$ and the Coxeter group is $D_{2n}$. Hence, the generators map to $D_{2n}\times\mathbb{Z}^2$ as follows:

\begin{center}
\begin{tabular}{|c|c|}
\hline
generator & $D_{2n}\times\mathbb{Z}^2$\\
\hline
$a$ & $(s, (1, 0))$\\
$b$ & $(sr, (0, 1))$\\
\hline
\end{tabular}
\end{center}

For GE-like purposes, we may consider $D_{2n}$ as a subset of $S_n$.

For the case $n = 4$, with $a\to (13), b\to (01)(23)$ in permutation notation, this gives rise to the following system of equations:

$$\begin{cases}
\ell_{a, 1} + \ell_{b, 2} + \ell_{a, 2} + \ell_{b, 3}  &= \ell_{b, 1} + \ell_{a, 1} + \ell_{b, 4} + \ell_{a, 4} \\
\ell_{a, 2} + \ell_{b, 1} + \ell_{a, 3} + \ell_{b, 2}  &= \ell_{b, 2} + \ell_{a, 4} + \ell_{b, 1} + \ell_{a, 3} \\
\ell_{a, 3} + \ell_{b, 4} + \ell_{a, 4} + \ell_{b, 1}  &= \ell_{b, 3} + \ell_{a, 3} + \ell_{b, 2} + \ell_{a, 2} \\
\ell_{a, 4} + \ell_{b, 3} + \ell_{a, 1} + \ell_{b, 4}  &= \ell_{b, 4} + \ell_{a, 2} + \ell_{b, 3} + \ell_{a, 1} \\
\end{cases}$$
(We used a computer program to generate this system of equations.)

Solving this system manually reveals a six-dimensional space: 
$$\ell_a = (\ell_{a, 1}, \ell_{a, 2}, \ell_{a, 3}, \ell_{a, 2}), \ell_b = (\ell_{b, 1}, \ell_{b, 2}, \ell_{b, 3}, \ell_{b, 2} + \ell_{b, 3} - \ell_{b, 1}).$$

Similarly, for the case $n = 6$ (also denoted $G_2$), we make the ansatz $a\to(26)(35), b\to(12)(36)(45)$, based on an action of the Coxeter group on the hexagon.
With the map $a\to\ell_a:= (\ell_{a, 1}, \ell_{a, 1}, \dots, \ell_{a, 6}), b\to\ell_b := (\ell_{b, 1}, \dots, \ell_{b, 6})$, this produces the following system of linear equations:

$$\begin{cases}
\ell_{a, 1} + \ell_{b, 1} + \ell_{a, 2} + \ell_{b, 6} + \ell_{a, 3} + \ell_{b, 5} &= \ell_{b, 1} + \ell_{a, 2} + \ell_{b, 6} + \ell_{a, 3} + \ell_{b, 5} + \ell_{a, 4}\\
\ell_{a, 2} + \ell_{b, 6} + \ell_{a, 3} + \ell_{b, 5} + \ell_{a, 4} + \ell_{b, 4} &= \ell_{b, 2} + \ell_{a, 1} + \ell_{b, 1} + \ell_{a, 2} + \ell_{b, 6} + \ell_{a, 3}\\
\ell_{a, 3} + \ell_{b, 5} + \ell_{a, 4} + \ell_{b, 4} + \ell_{a, 5} + \ell_{b, 3} &= \ell_{b, 3} + \ell_{a, 6} + \ell_{b, 2} + \ell_{a, 1} + \ell_{b, 1} + \ell_{a, 2}\\
\ell_{a, 4} + \ell_{b, 4} + \ell_{a, 5} + \ell_{b, 3} + \ell_{a, 6} + \ell_{b, 2} &= \ell_{b, 4} + \ell_{a, 5} + \ell_{b, 3} + \ell_{a, 6} + \ell_{b, 2} + \ell_{a, 1}\\
\ell_{a, 5} + \ell_{b, 3} + \ell_{a, 6} + \ell_{b, 2} + \ell_{a, 1} + \ell_{b, 1} &= \ell_{b, 5} + \ell_{a, 4} + \ell_{b, 4} + \ell_{a, 5} + \ell_{b, 3} + \ell_{a, 6}\\
\ell_{a, 6} + \ell_{b, 2} + \ell_{a, 1} + \ell_{b, 1} + \ell_{a, 2} + \ell_{b, 6} &= \ell_{b, 6} + \ell_{a, 3} + \ell_{b, 5} + \ell_{a, 4} + \ell_{b, 4} + \ell_{a, 5}\\
\end{cases}$$

This system is highly symmetric and many of the variables cancel. 
Therefore, we can solve it exactly, producing the following linear parametrization of the entire solution space over the integers, which is nine-dimensional: 
$$\ell_a = (a_1, a_2, a_3, a_1, a_2 + x_2, a_3 + x_2), \ell_b = (b_1, b_2, b_3, b_1 + y_1, b_2 - y_1, b_3 + y_3),$$
where $a_1, a_2, a_3, b_1, b_2, b_3, x_2, y_1, y_3$ range independently over $\mathbb{Z}$.

\subsubsection{The $B_n$ case}

\hspace{\parindent}The isomorphism between the Coxeter group of this case and $C_2\wr S_n = C_2^n\rtimes S_n$ can be tabulated in generators (where we list the generators of the Coxeter graph as $a_1, a_2, \dots, a_n$) as follows:

\begin{center}
    \begin{tabular}{|c|c|c|}
    \hline
    Coxeter & $S_n$ & $C_2^n$\\
    \hline
    $a_1$ & id & $(1, 0, \dots, 0)$\\
    $a_2$ & (1, 2) & $(0, 0, \dots, 0)$\\
    $a_3$ & (2, 3) & $(0, 0, \dots, 0)$\\
    \vdots & \vdots & \vdots\\
    $a_n$ & ($n - 1$, $n$) & $(0, 0, \dots, 0)$\\
    \hline
    \end{tabular}
\end{center}

In the classical case of the braid group $B_3$, the space of possible ``linking vector'' analogues is four-dimensional, and spanned by ``symmetric GE'' ($\sigma_1 \mapsto (1, 1, 0), \sigma_2 \mapsto (0, 1, 1)$), ``row GE'' ($\sigma_1 \mapsto (0, 0, 1), \sigma_2 \mapsto (1, 0, 0)$), ``zero-ish GE 1'' ($\sigma_1 \mapsto (1, -1, 0), \sigma_2 \mapsto 0$), and ``zero-ish GE 2'' ($\sigma_1 \mapsto 0, \sigma_2 \mapsto (0, 1, -1)$). We can derive this by noting that if we have $\sigma_1 \mapsto (a_1, b_1, c_1), \sigma_2 \mapsto (a_2, b_2, c_2)$, then the single braiding relation $\sigma_1\sigma_2\sigma_1 = \sigma_2\sigma_1\sigma_2$ gives us $c_1 = a_2$ and $a_1 + b_1 = b_2 + c_2$ as constraints after simplification; some elementary algebra then gives us the four basis vectors. 

As for the images of the corresponding homomorphisms of $B_3$ over the pure braids only, the image of symmetric GE is the set of vectors of 3 even integers which sum to 0 modulo 4, the image of row GE is just the set of vectors of 3 even integers, and the images of the two ``zero-ish'' analogs are just all zero.

As for higher braid groups $B_n$, the ``far commutativity relations'' $\sigma_i\sigma_j = \sigma_j\sigma_i$ for $|i - j|\geq 2$ basically constrain all the entries of the ``linking vector'' corresponding to $\sigma_i$ (which I’ll call $\ell_i$ by analogy) right of the $(i+1)^{\text{th}}$ to be the same within, and the entries of $\ell_i$ left of the $i^{\text{th}}$ to also be the same within. Combined with the braid relations $\sigma_i\sigma_{i+1}\sigma_i = \sigma_{i+1}\sigma_i\sigma_{i+1}$, we get $\ell_{i, i+2} = \ell_{i + 1, i}$ as well as $\ell_{i, i} + \ell_{i, i+1} = \ell_{i+1, i+1} + \ell_{i+1, i+2}$. Hence the set of possible $\ell_i$ combinations can be spanned by the following:

\begin{itemize}
\item Classical symmetric GE ($\ell_i = e_i + e_{i + 1}$)
\item Row GE for row $i_0$ ($\ell_{i_0} = \sum_{i: i > i_0 + 1} e_i, \ell_{i_0 + 1} = \sum_{i: i < i_0} e_i, \ell_i = 0$ elsewise)
\item Zero-ish GE for row $i_0$ ($\ell_{i_0} = e_{i_0} - e_{i_0 + 1}, \ell_i = 0$ elsewise)
\end{itemize}

There are also corresponding 1-cocycles. We can take linear combinations thereof to get even more 1-cocycles since $\mathbb{Z}^3$ is abelian.

\subsection{Super-Gauss-Epple analogues}

\hspace{\parindent}Since $\mathcal{C}$ acts on $\Phi$, this induces an action of $\mathcal{C}$ on $\mathbb{Z}^\Phi$.
Hence, there is a homomorphism $\mathcal{A}\to\mathbb{Z}^{\Phi}\rtimes\mathcal{C}$, defined by $a\mapsto(\alpha_a, \mathcal{C}(a))$, where $\mathcal{C}(a)$ is the image of $a$ under the quotient map $\mathcal{A}\to\mathcal{C}$; by analogy, we shall also denote it $SGE$, or $SGE_{\mathcal{A}}$ when necessary.

This is a generalization of the super-Gauss-Epple homomorphism, when $\mathcal{A} = B_n, \mathcal{C} = S_n$.
Here, the root system $\Phi$ is $\{\vec{e_i} - \vec{e_j}|1\leq i\neq j\leq n\}$, which has size $n(n-1)$, and we identify the $\vec{e_i} - \vec{e_j}$ component of $\mathbb{Z}^{\Phi}$ with the $(i, j)$th component of a matrix whose diagonal elements are all zero. 

We can show that this is indeed a homomorphism with geometric arguments.

\begin{theorem}\label{thm-SGE-welldef-Artingroups}
There is a unique homomorphism $\mathcal{A}\to\mathbb{Z}^{\Phi}\rtimes\mathcal{C}$, defined by $a\mapsto(\alpha_a, \mathcal{C}(a))$, where $\mathcal{C}(a)$ is the image of $a$ under the quotient map $\mathcal{A}\to\mathcal{C}$.
\end{theorem}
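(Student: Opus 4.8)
The plan is to make the defining vector explicit and then reduce the entire statement to a single fact about rank-two root subsystems. Write $s_a := \mathcal{C}(a)$ for the simple reflection attached to a generator $a$, and write $e_\beta \in \mathbb{Z}^\Phi$ for the standard basis vector indexed by a root $\beta \in \Phi$. Generalizing $SGE(\sigma_i) = (O_{i,i+1},(i,i+1))$, the natural choice is $\alpha_a := e_{\Delta_a}$, so the proposed map sends $a \mapsto (e_{\Delta_a}, s_a)$. Recall that $\mathcal{C}$ acts on $\mathbb{Z}^\Phi$ by permuting basis vectors along its action on roots, i.e. $w \cdot e_\beta = e_{w(\beta)}$ for all $w \in \mathcal{C}$ and $\beta \in \Phi$, and that the group law on $\mathbb{Z}^\Phi \rtimes \mathcal{C}$ is $(\alpha, w)(\alpha', w') = (\alpha + w\cdot\alpha',\, ww')$. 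By the universal property of the presentation $\langle \mathbf{S} \mid R\rangle$ of $\mathcal{A}$, the assignment extends to a homomorphism as soon as each braid relation $\underbrace{aba\cdots}_{m} = \underbrace{bab\cdots}_{m}$ (with $m = m_{ab}$ the order of $s_a s_b$) is respected by the images of the generators; and this extension is necessarily unique, since $\mathbf{S}$ generates $\mathcal{A}$.

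The $\mathcal{C}$-coordinate of both sides of a braid relation is the corresponding braid relation among $s_a, s_b$, which holds in $\mathcal{C}$ by definition, so all of the content lies in the $\mathbb{Z}^\Phi$-coordinate. Unwinding the semidirect-product multiplication, the vector coordinate of an alternating word $g_1 g_2 \cdots g_m$ (each $g_k \in \{a,b\}$) is $\sum_{k=1}^m (s_{g_1}\cdots s_{g_{k-1}})\cdot e_{\Delta_{g_k}} = \sum_{k=1}^m e_{\gamma_k}$, where $\gamma_k := (s_{g_1}\cdots s_{g_{k-1}})(\Delta_{g_k})$. Since $\mathcal{C}$ permutes $\Phi$ and each $\Delta_{g_k} \in \Phi$, every $\gamma_k$ is again a root, so each $e_{\gamma_k}$ is a legitimate basis vector.

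The key step is that the sequence $(\gamma_1,\dots,\gamma_m)$ is the standard root enumeration attached to the reduced expression $s_{g_1}\cdots s_{g_m}$, and a classical lemma on reflection groups states that the underlying set $\{\gamma_1,\dots,\gamma_m\}$ — the inversion set of the group element $s_{g_1}\cdots s_{g_m}$ (the positive roots $\beta$ with $(s_{g_1}\cdots s_{g_m})^{-1}(\beta) \in \Phi^-$) — depends only on that element, not on the reduced expression. Now both $aba\cdots$ and $bab\cdots$ of length $m$ are reduced expressions for the longest element $w_0$ of the dihedral parabolic $\langle s_a, s_b\rangle$, whose inversion set is exactly the set of positive roots of the rank-two subsystem $\Phi \cap \mathrm{span}_{\mathbb{R}}(\Delta_a,\Delta_b)$. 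Hence the two enumerations sweep out the same set of $m$ distinct roots, so $\sum_k e_{\gamma_k}$ agrees on the two sides. The vector coordinates therefore match, the braid relation is verified, and the homomorphism (and its uniqueness) follows.

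I expect the only genuine obstacle to be the root-theoretic lemma invoked above: that the partial-product roots $\gamma_k = (s_{g_1}\cdots s_{g_{k-1}})(\Delta_{g_k})$ form a set of distinct positive roots depending only on the underlying element of $\mathcal{C}$. This is standard — provable via the exchange condition or by induction on word length — but it is where all the real work sits; everything else is bookkeeping in the semidirect product together with the automatic agreement of the Coxeter coordinates. As a concrete alternative that avoids citing the general lemma, one can verify the required equality directly inside each rank-two system by listing the $m$ positive roots of $\langle \Delta_a,\Delta_b\rangle$ and checking that the two alternating reflection sequences enumerate precisely those roots; this also exposes the \emph{geometric} character of the argument, since it is simply the statement that the two boundary paths around a dihedral chamber cross the same set of reflecting hyperplanes.
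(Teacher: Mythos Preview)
Your proof is correct and the setup matches the paper's exactly: reduce to checking each braid relation, observe that the $\mathcal{C}$-coordinate is automatic, and then compare the $\mathbb{Z}^\Phi$-coordinates by expanding the semidirect-product word into a sum $\sum_k e_{\gamma_k}$ of basis vectors indexed by roots. Where you diverge is in how you finish. The paper treats the four crystallographic braid lengths $m\in\{2,3,4,6\}$ separately and, in each case, computes the sequences $\gamma_1,\dots,\gamma_m$ by hand using the explicit angle $\pi - \pi/m$ between the two simple roots, checking directly that the two alternating words produce the same multiset of roots. You instead invoke the standard Coxeter-theoretic fact that for any reduced expression $w = s_{i_1}\cdots s_{i_m}$ the roots $\gamma_k = s_{i_1}\cdots s_{i_{k-1}}(\alpha_{i_k})$ enumerate the inversion set $N(w)$, which depends only on $w$; since both sides of a braid relation are reduced words for the longest element of the dihedral parabolic $\langle s_a, s_b\rangle$, the two sums coincide.

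Your route is cleaner and uniform in $m$ (and would work verbatim for arbitrary finite-type Artin groups, not just crystallographic ones), at the cost of importing a lemma from the general theory of reflection groups. The paper's route is more elementary and self-contained---no outside citation is needed---but requires four separate geometric computations. You anticipated this trade-off yourself: the ``concrete alternative'' you sketch at the end, listing the positive roots of each rank-two subsystem and checking the two enumerations directly, \emph{is} the paper's proof.
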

\begin{proof}
We show that the braid relations of $\mathcal{A}$ are satisfied.
To do this, we show that the $\mathbb{Z}^{\Phi}$ sides are identical.
As these values are formal linear combinations of roots (elements of $\Phi$), we also consider them as multisets, and prove them equal accordingly.
(We do not consider the $\mathcal{C}$ sides because these are trivially equal due to the fact that there is a bona-fide homomorphism $\mathcal{A} \to \mathcal{C}$.)

Let $a_1$ and $a_2$ be two generators of $\mathcal{A}$ for which there is a braid relation.
For notational simplicity, we will refer to the corresponding vectors as $\vec{v}_1 := \mathcal{C}(a_1), \vec{v}_2 := \mathcal{C}(a_2)$.
We shall also introduce the notation $*$.
For two arbitrary vectors $\vec{v}$ and $\vec{w}$, we use $\vec{v} * \vec{w}$ to denote the vector that results from reflecting $\vec{w}$ over the orthogonal hyperplane of $\vec{v}$.
We will compose it right-associatively, so that $\vec{v} * \vec{w} * \vec{x}$ shall refer to $\vec{v} * (\vec{w} * \vec{x})$.
We now perform casework depending on the length of this braid relation:

\begin{figure}[H]
    \centering
    \includegraphics[height=0.2\textwidth]{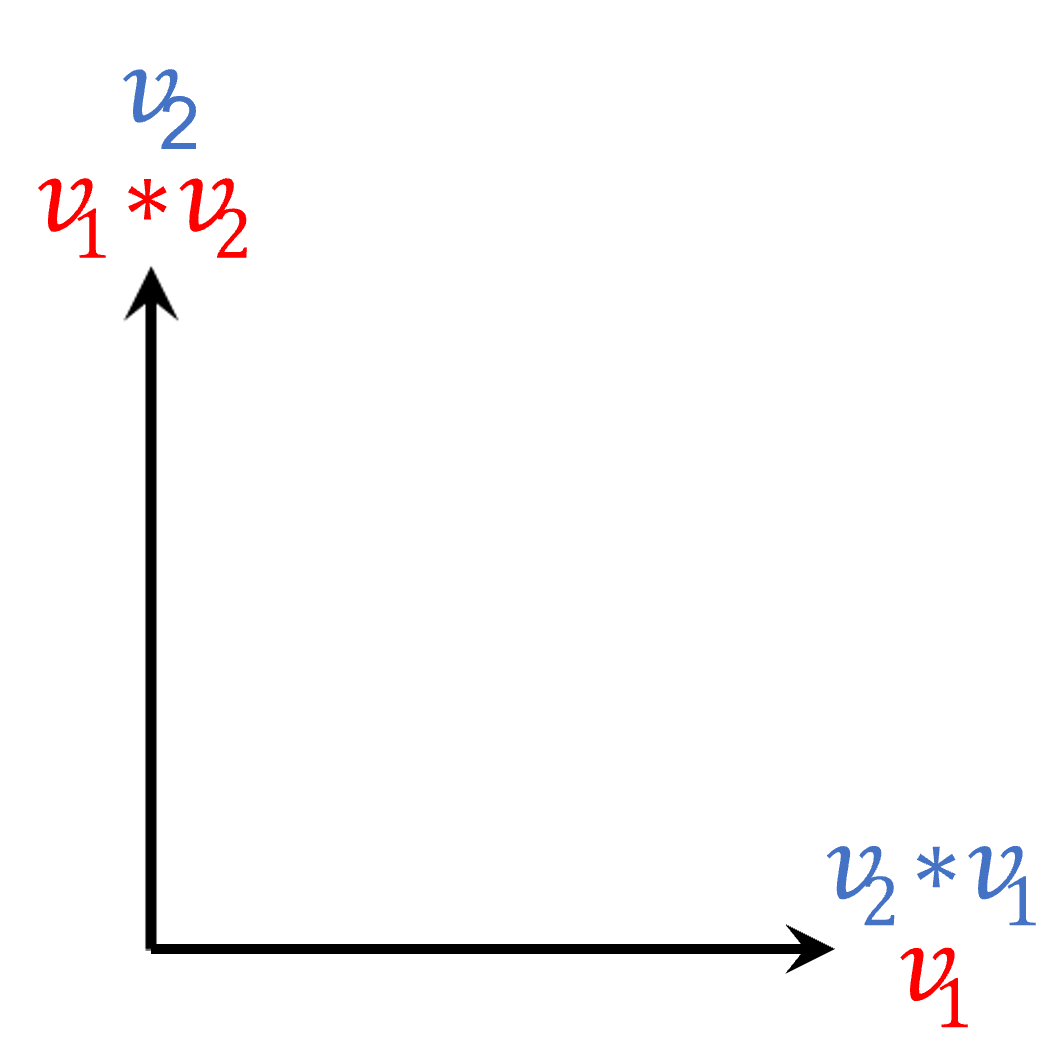}
    \caption{Two vectors for Coxeter group generators with a braid relation of length 2.}
    \label{fig:artingenrellen2}
\end{figure}
\textbf{Length 2}: We have $a_1 a_2 = a_2 a_1$, so $\vec{v}_1$ and $\vec{v}_2$ are orthogonal, as shown in Figure \ref{fig:artingenrellen2}.
We find that $\vec{v}_1 * \vec{v}_2 = \vec{v}_2$ and that $\vec{v}_2 * \vec{v}_1 = \vec{v}_1$.
Therefore, we have $\{\vec{v}_1, \vec{v}_1 * \vec{v}_2\} = \{\vec{v}_2, \vec{v}_2 * \vec{v}_1\}$, as desired.

\begin{figure}[H]
    \centering
    \includegraphics[height=0.19\textwidth]{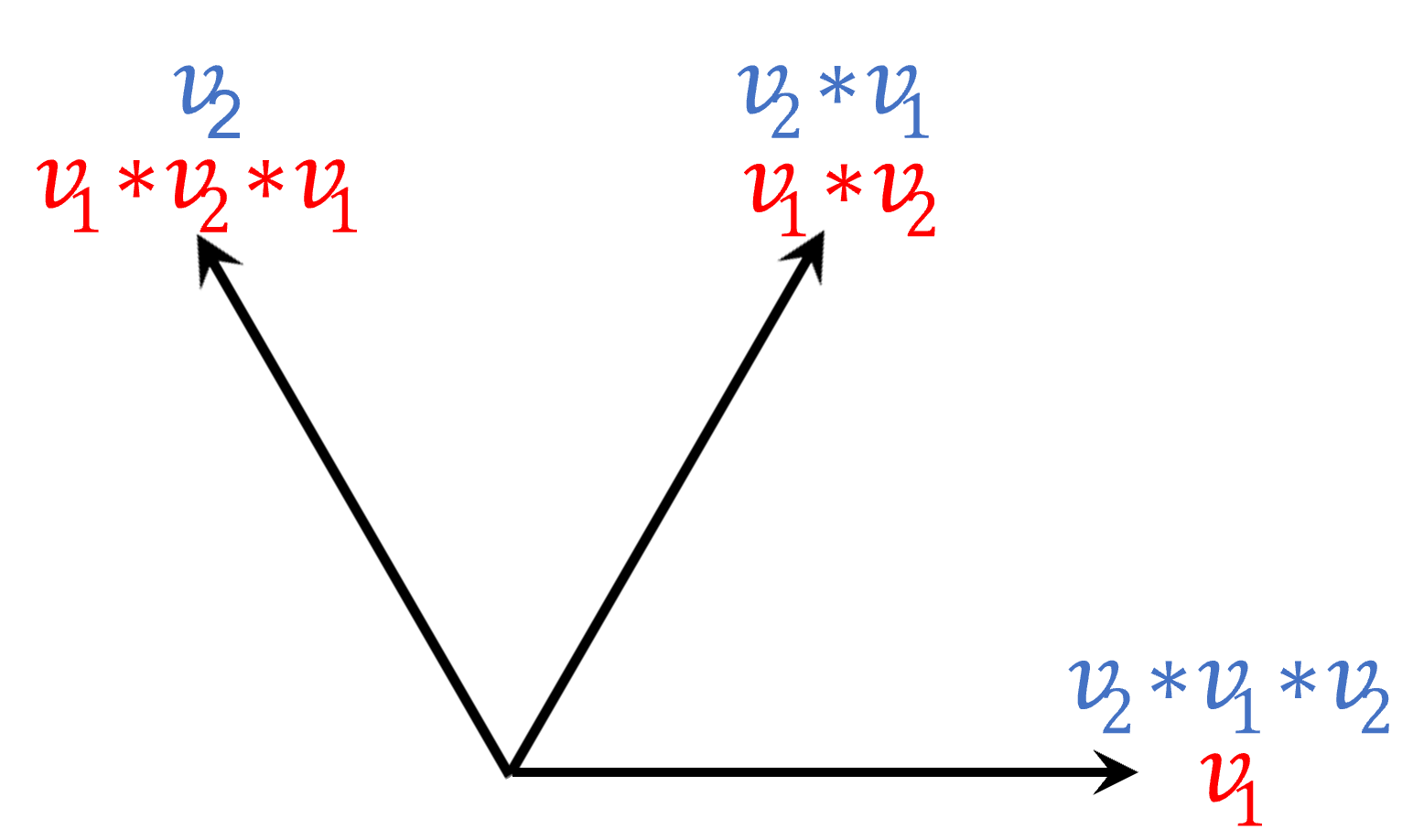}
    \caption{Two vectors for Coxeter group generators with a braid relation of length 3.}
    \label{fig:artingenrellen3}
\end{figure}
\textbf{Length 3}: We have $a_1 a_2 a_1 = a_2 a_1 a_2$, so $\vec{v}_1$ and $\vec{v}_2$ have an angle of $120^\circ$ between them, as shown in Figure \ref{fig:artingenrellen3}.
We find that $\vec{v}_1 * \vec{v}_2 = \vec{v}_2 * \vec{v}_1$ (both being the vector that bisects the angle between $\vec{v}_1$ and $\vec{v}_2$).
Furthermore, we find that $\vec{v}_1 * (\vec{v}_2 * \vec{v}_1) = \vec{v}_2$ and $\vec{v}_2 * (\vec{v}_1 * \vec{v}_2) = \vec{v}_1$.
Therefore, we have $\{\vec{v}_1, \vec{v}_1 * \vec{v}_2, \vec{v}_1 * (\vec{v}_2 * \vec{v}_1)\} = \{\vec{v}_2, \vec{v}_2 * \vec{v}_1, \vec{v}_2 * (\vec{v}_1 * \vec{v}_2)\}$, as desired.

\begin{figure}[h!]
    \centering
    \includegraphics[height=0.22\textwidth]{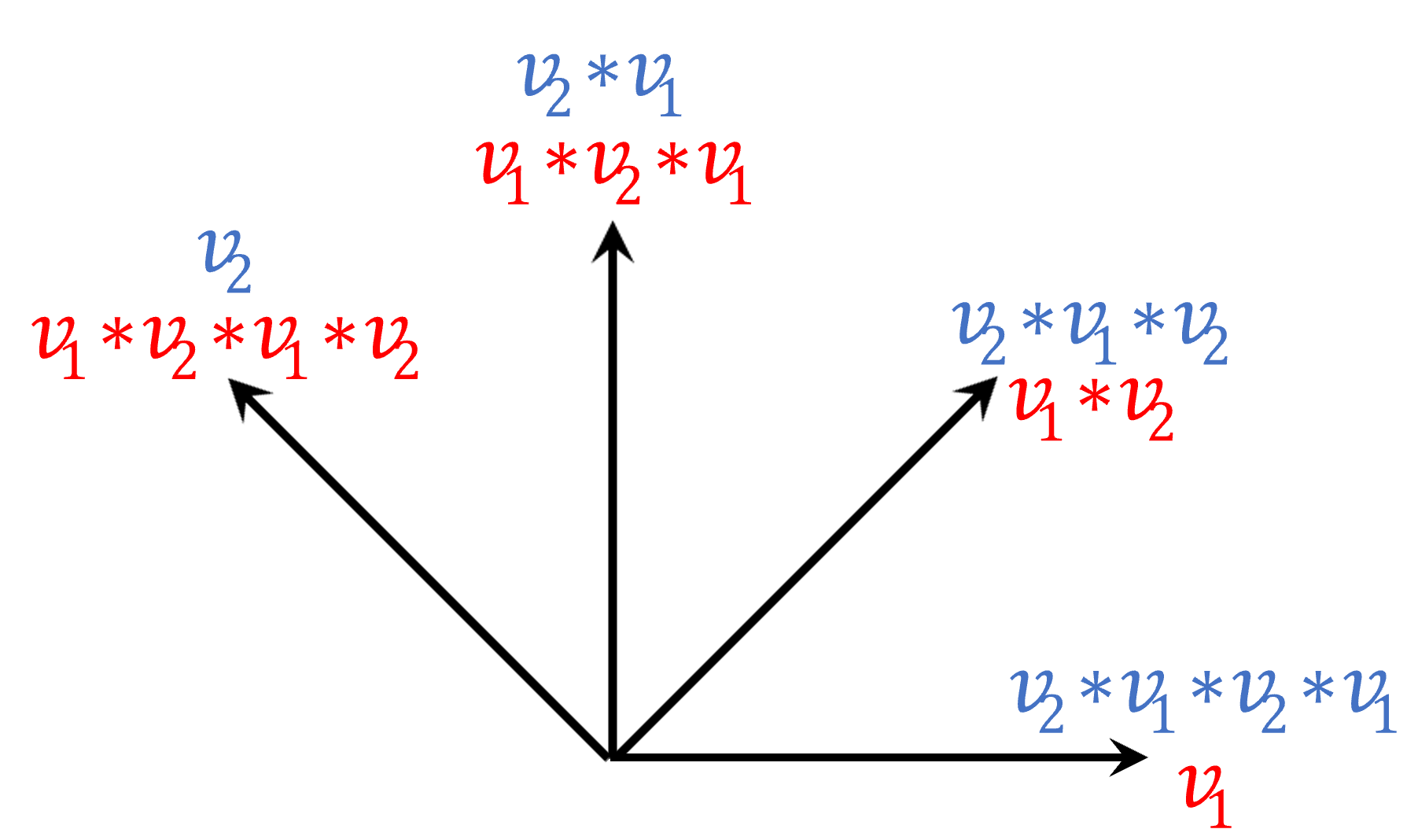}
    \caption{Two vectors for Coxeter group generators with a braid relation of length 4.}
    \label{fig:artingenrellen4}
\end{figure}
\textbf{Length 4}: We have $(a_1 a_2)^2 = (a_2 a_1)^2$, so $\vec{v}_1$ and $\vec{v}_2$ have an angle of $135^\circ$ between them, as shown in Figure \ref{fig:artingenrellen4}.
We find that $\vec{v}_1 * \vec{v}_2$ has an angle of $45^\circ$ from $\vec{v}_1$ towards $\vec{v}_2$, and is so orthogonal to $\vec{v}_2$.
Similarly, $\vec{v}_2 * \vec{v}_1$ has an angle of $45^\circ$ from $\vec{v}_2$ towards $\vec{v}_1$, and is so orthogonal to $\vec{v}_1$.
Therefore, we have $\vec{v}_1 * (\vec{v}_2 * \vec{v}_1) = \vec{v}_2 * \vec{v}_1$ and $\vec{v}_2 * (\vec{v}_1 * \vec{v}_2) = \vec{v}_1 * \vec{v}_2$.
Furthermore, we have $\vec{v}_1 * \vec{v}_2 * \vec{v}_1 * \vec{v}_2 = \vec{v}_2$ and $\vec{v}_2 * \vec{v}_1 * \vec{v}_2 * \vec{v}_1 = \vec{v}_1$.
Therefore, we conclude that $\{\vec{v}_1, \vec{v}_1 * \vec{v}_2, \vec{v}_1 * (\vec{v}_2 * \vec{v}_1), \vec{v}_1 * \vec{v}_2 * \vec{v}_1 * \vec{v}_2\} = \{\vec{v}_2, \vec{v}_2 * \vec{v}_1, \vec{v}_2 * (\vec{v}_1 * \vec{v}_2, \vec{v}_2 * \vec{v}_1 * \vec{v}_2 * \vec{v}_1\}$, as desired.

\begin{figure}[h]
    \centering
    \includegraphics[height=0.22\textwidth]{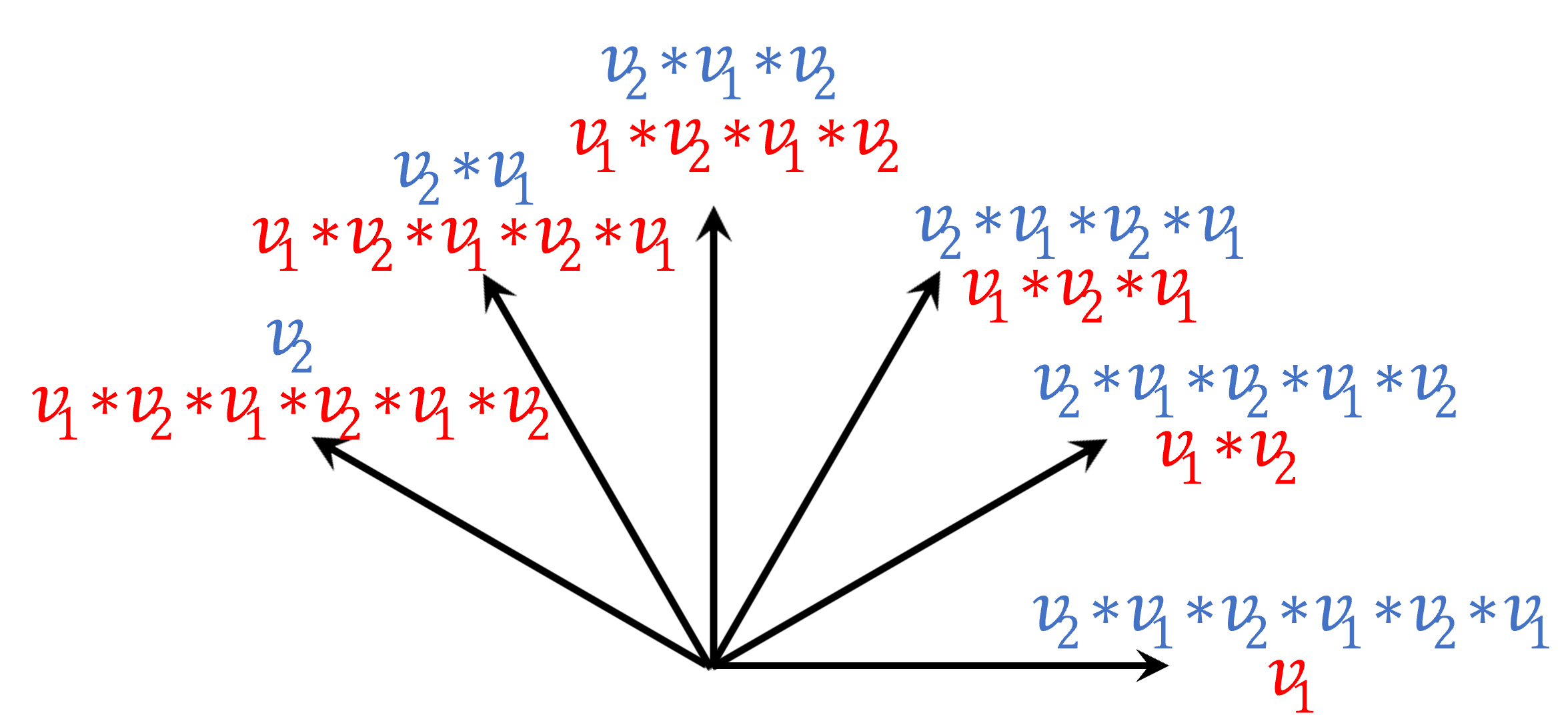}
    \caption{Two vectors for Coxeter group generators with a braid relation of length 6.}
    \label{fig:artingenrellen6}
\end{figure}
\textbf{Length 6}: We have $(a_1 a_2)^3 = (a_2 a_1)^3$, so $\vec{v}_1$ and $\vec{v}_2$ have an angle of $150^\circ$ between them, as shown in Figure \ref{fig:artingenrellen6}.
Divide this angle by four evenly spaced vectors, named $\vec{w}_1, \vec{w}_2, \vec{w}_3, \vec{w}_4$, so that the order of the vectors is $\vec{v}_1, \vec{w}_1, \vec{w}_2, \vec{w}_3, \vec{w}_4, \vec{v}_2$ (with $30$-degree gaps between each consecutive pair).
As before, we can calculate that $\vec{v}_1, \vec{v}_1 * \vec{v}_2, \vec{v}_1 * \vec{v}_2 * \vec{v}_1, \dots, \vec{v}_1 * \vec{v}_2 * \vec{v}_1 * \vec{v}_2 * \vec{v}_1 * \vec{v}_2$ is precisely $\vec{v}_1, \vec{w}_1, \vec{w}_2, \vec{w}_3, \vec{w}_4, \vec{v}_2$, in that order.
Similarly, we can compute that $\vec{v}_2, \vec{v}_2 * \vec{v}_1, \vec{v}_2 * \vec{v}_1 * \vec{v}_2, \dots, \vec{v}_2 * \vec{v}_1 * \vec{v}_2 * \vec{v}_1 * \vec{v}_2 * \vec{v}_1$ is precisely $\vec{v}_2, \vec{w}_4, \vec{w}_3, \vec{w}_2, \vec{w}_1, \vec{v}_1$, in that order.
Hence, we have $\{\vec{v}_1, \vec{v}_1 * \vec{v}_2, \vec{v}_1 * \vec{v}_2 * \vec{v}_1, \dots, \vec{v}_1 * \vec{v}_2 * \vec{v}_1 * \vec{v}_2 * \vec{v}_1 * \vec{v}_2\} = \{\vec{v}_2, \vec{v}_2 * \vec{v}_1, \vec{v}_2 * \vec{v}_1 * \vec{v}_2, \dots, \vec{v}_2 * \vec{v}_1 * \vec{v}_2 * \vec{v}_1 * \vec{v}_2 * \vec{v}_1\}$ as desired, and we are done.
\end{proof}

\subsection{Another commutative diagram}

\hspace{\parindent}We can summarize this in the following commutative diagram:

\[\begin{tikzcd}
	{\mathcal{P}} & {\mathcal{A}} & {\mathcal{C}} \\
	{\mathcal{P}/[\mathcal{P}, \mathcal{P}]} & {\mathcal{A}/[\mathcal{P}, \mathcal{P}]}
	\arrow[two heads, from=1-1, to=2-1]
	\arrow[hook', from=2-1, to=2-2]
	\arrow[hook, from=1-1, to=1-2]
	\arrow["SGE", two heads, from=1-2, to=2-2]
	\arrow[two heads, from=2-2, to=1-3]
	\arrow[two heads, from=1-2, to=1-3]
\end{tikzcd}\]

Here, every arrow is either an inclusion map (indicated as being injective) or a quotient map (indicated as being surjective).

\subsection{More 1-cocycles}

\hspace{\parindent}Again, suppose that $\mathcal{A}$ is an Artin group of finite type, $\mathcal{C}$ is its associated Coxeter group, $\Phi$ is an associated root system, and $\Delta\subset\Phi$ is an associated subset of simple roots (and let $\Delta_a$ be the associated simple root for any Artin generator $a$ of $\mathcal{A}$).
Define the function $F: \mathcal{A}\to\mathbb{Z}^{\Phi}$ by 
$$\forall a\in\mathcal{A}: SGE(a) = (\ell, \mathcal{C}(a)) \implies F(a) = \ell - \overline{\ell},$$
where $\ell$ is a vector of $\mathbb{Z}^{\Phi}$ and $\overline{\ell}$ is the image of $\ell$ under the rotation/reflection of $\mathbb{Z}^{\Phi}$ given by $e_{\alpha}\to e_{-\alpha}\forall\alpha\in\Phi$.
It is clear that $F$ satisfies the relation $F(ab) = F(a) + \mathcal{C}(a)\cdot F(b)$.

Then, we have the following lemmas (generalizing those of Section \ref{section-braids-1cocy}):

\begin{lemma}\label{lem-Fbar-exists-artingroups}
Let $F: \mathcal{A}\to\mathbb{Z}^{\Phi}$ be as defined for Artin groups of finite type, and let $p\in\mathcal{P}$ (where $\mathcal{P}$ is the kernel of the quotient map $\mathcal{A}\to\mathcal{C}$). Then $F(p) = \vec{0}$, and hence the map $F$ descends to a map $\overline{F}: \mathcal{C}\to\mathbb{Z}^{\Phi}$
\end{lemma}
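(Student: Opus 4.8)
The plan is to mirror the argument of Lemma \ref{lem-SGE-pure-symmetric} from the braid case, adapting it to the general root-system setting. The key structural fact to establish is that $F$ vanishes on $\mathcal{P}$; once this is known, the descent to $\overline{F}: \mathcal{C}\to\mathbb{Z}^{\Phi}$ is a formal consequence. First I would recall the cocycle relation $F(ab) = F(a) + \mathcal{C}(a)\cdot F(b)$, which holds for all $a, b\in\mathcal{A}$ as already observed just before the lemma statement. From this, if $F(p) = \vec{0}$ for every $p\in\mathcal{P}$, then for any $a\in\mathcal{A}$ and $p\in\mathcal{P}$ we have $F(pa) = F(p) + \mathcal{C}(p)\cdot F(a) = \vec{0} + \mathrm{id}\cdot F(a) = F(a)$, using that $\mathcal{C}(p) = \mathrm{id}$ since $p$ lies in the kernel $\mathcal{P}$ of the quotient $\mathcal{A}\to\mathcal{C}$. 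Thus $F$ is constant on left cosets of $\mathcal{P}$, which are precisely the fibers of the quotient map $\mathcal{A}\to\mathcal{C}$, so $F$ descends to a well-defined $\overline{F}$ on $\mathcal{C}$.

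The heart of the argument is therefore showing $F(p) = \vec{0}$ for all $p\in\mathcal{P}$. I would argue as follows. Recall $F(a) = \ell - \overline{\ell}$, where $SGE(a) = (\ell, \mathcal{C}(a))$ and $\overline{\ell}$ is the image of $\ell$ under the involution $e_{\alpha}\mapsto e_{-\alpha}$ of $\mathbb{Z}^{\Phi}$; thus $F(a) = \vec 0$ is equivalent to the statement that the $\mathbb{Z}^{\Phi}$-component of $SGE(a)$ is \emph{symmetric} under $\alpha\mapsto -\alpha$. In the braid case this was the content of Lemma \ref{lem-SGE-pure-symmetric}, proved using the explicit pure-braid generators $A_{i,j}$ together with the additivity of $SGE$-vectors over pure elements. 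For the general case I would use that $\mathcal{P}$ is the kernel of $\mathcal{A}\to\mathcal{C}$, and reduce to a generating set of $\mathcal{P}$ of the form $w\, a^{2}\, w^{-1}$ where $a$ is an Artin generator and $w$ is a suitable positive word (the analogue of the $A_{i,j}$). The $\mathbb{Z}^{\Phi}$-component of $SGE(a^2) = SGE(a)\cdot SGE(a)$ is $\alpha_a + \mathcal{C}(a)\cdot\alpha_a = e_{\Delta_a} + e_{-\Delta_a}$, since $\mathcal{C}(a)$ reflects $\Delta_a$ to $-\Delta_a$; this is manifestly fixed by $\alpha\mapsto-\alpha$, so $F(a^2) = \vec 0$. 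Conjugation by $w$ acts on $F$ through the cocycle relation via the orthogonal transformation $\mathcal{C}(w)$, which commutes with the involution $\alpha\mapsto-\alpha$ (as $-\mathrm{id}$ is central in $GL$), so $F(w a^2 w^{-1}) = \mathcal{C}(w)\cdot F(a^2) = \vec 0$ as well. Since $F$ is additive up to the $\mathcal{C}$-twist and every element of $\mathcal{P}$ is a product of such conjugated squares, $F$ vanishes on all of $\mathcal{P}$.

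The main obstacle I anticipate is justifying the claim that $\mathcal{P}$ is generated by conjugates of squares of Artin generators. In the braid case this is precisely the known generating set $\{A_{i,j}\}$ for the pure braid group cited from \cite{Suciu16}, but for a general Artin group of finite type one needs the corresponding statement that the pure Artin group (the kernel of $\mathcal{A}\to\mathcal{C}$) is normally generated by the squares $a^2$ of the Artin generators. I would either invoke this as a standard fact about Artin groups of finite type or, to avoid dependence on it, give an alternative argument: since $F$ already descends once we know it is trivial on $\mathcal{P}$, it suffices to check that the candidate map $\overline{F}$ is well-defined directly, i.e.\ that $F$ takes the same value on any two words representing the same Coxeter element. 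This can be verified on the defining braid relations of $\mathcal{A}$ using the multiset equalities established in the proof of Theorem \ref{thm-SGE-welldef-Artingroups}: the symmetrized vector $\ell-\overline\ell$ depends only on the ordered sequence of reflected roots, and the two sides of each braid relation produce the same multiset of roots, hence the same $F$-value. Either route closes the argument; I expect the cleanest writeup to combine the cocycle bookkeeping above with a pointer to Theorem \ref{thm-SGE-welldef-Artingroups} for the relation-by-relation check.
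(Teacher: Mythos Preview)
Your proposal is correct and follows essentially the same route as the paper: compute $F(a^2)=\vec 0$ for each Artin generator via the cocycle identity, propagate this through conjugation, and then use that $\mathcal{P}$ is the normal closure of the squares $a^2$ to conclude $F|_{\mathcal{P}}=\vec 0$. Your one stated worry is not a real obstacle: since $\mathcal{C}$ is by definition the quotient of $\mathcal{A}$ by the additional relations $a^2=1$, the kernel $\mathcal{P}$ is automatically the normal closure of $\{a^2\}$, so no external structural result about pure Artin groups is needed.
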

\begin{proof}
Let $a$ be an Artin generator.
Then $F(a^2) = F(a) + \mathcal{C}(a)\cdot F(a) = (\phi_{\Delta_a} - \phi_{-\Delta_a}) + (\phi_{-\Delta_a} - \phi_{\Delta_a}) = \vec{0}$.

Again, let $p\in\mathcal{P}$, $\upsilon$ be an Artin generator of $\mathcal{A}$, and suppose that $F(p) = 0$.
Then $F(\upsilon p\upsilon^{-1}) = F(\upsilon) + \mathcal{C}(\upsilon)\cdot F(p) + \mathcal{C}(\upsilon p)\cdot F(\upsilon^{-1}) = (\phi_{\Delta_{\upsilon}} - \phi_{-\Delta_{\upsilon}}) + \mathcal{C}(\upsilon)\cdot\vec{0} + (\phi_{-\Delta_{\upsilon}} - \phi_{\Delta_{\upsilon}}) = \vec{0}$.

By induction, we thus show that $p\in\mathcal{P}$, $F(p) = 0$.
Hence, $F$ maps cosets of $\mathcal{P}$ in $\mathcal{A}$ into single elements of $\mathbb{Z}^{\phi}$, so we conclude $\overline{F}$ exists and we are done.
\end{proof}

\begin{lemma}\label{lem-Fbar-zeroone-artingroups}
Let $\overline{F}: \mathcal{C}\to\mathbb{Z}^{\Phi}$ be as defined in Lemma \ref{lem-Fbar-exists-artingroups}, and let $\Phi^+$ be the set of positive roots associated to the choice $\Delta$ of simple roots.
Then for all $\phi\in\Phi^+, c\in\mathcal{C}$, we have $\overline{F}(c)[\phi]\in\{0, 1\}$ (where $\overline{F}(c)[\phi]$ is the $\phi$ component of $\overline{F}(c)$).
\end{lemma}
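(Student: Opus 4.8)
The plan is to mimic the inductive argument of Theorem \ref{thm-overlineF-upperhalf-zeroone}, inducting on the Coxeter length $\ell(c)$ of $c\in\mathcal{C}$ (the least number of simple reflections needed to write $c$). The base case $c=\mathrm{id}$ is immediate, since $\overline{F}(\mathrm{id})=\vec 0$ by Lemma \ref{lem-Fbar-exists-artingroups}. For the inductive step I would peel the first letter off a reduced word and write $c = s c'$, where $s = s_\beta$ is the simple reflection attached to the simple root $\beta = \Delta_a$ of some Artin generator $a$, and $\ell(c') = \ell(c)-1$. The cocycle relation of Lemma \ref{lem-Fbar-exists-artingroups} gives $\overline{F}(c) = \overline{F}(s) + s\cdot\overline{F}(c')$; reading off the component indexed by a positive root $\phi$, and using that the $\mathcal{C}$-action merely permutes the root-indices so that $(s\cdot v)[\phi] = v[s^{-1}\phi] = v[s\phi]$, I obtain $\overline{F}(c)[\phi] = \overline{F}(s)[\phi] + \overline{F}(c')[s\phi]$.

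Two inputs drive the estimate. First, a short computation from the definitions of $SGE$ and $F$ shows that $\overline{F}(s)=\overline{F}(s_\beta)$ has component $+1$ at the root $\beta$, component $-1$ at $-\beta$, and component $0$ at every other root. Second, and crucially, $\overline{F}(c')$ is antisymmetric under root negation: because $F$ is built as $\ell-\overline{\ell}$ (so that $F(a)$ is negated by the involution sending each root to its opposite), one gets $\overline{F}(c')[-\phi] = -\overline{F}(c')[\phi]$ for every root $\phi$. This antisymmetry is the conceptual stand-in for the ``lower triangle equals minus the transpose of the upper triangle'' feature exploited silently in the type-$A$ argument of Theorem \ref{thm-overlineF-upperhalf-zeroone}.

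I would then split into cases using the classical root-system fact that $s_\beta$ sends $\beta\mapsto -\beta$ while permuting $\Phi^+\setminus\{\beta\}$. If $\phi\in\Phi^+$ with $\phi\neq\beta$, then $\overline{F}(s)[\phi]=0$ and $s\phi\in\Phi^+$, so $\overline{F}(c)[\phi] = \overline{F}(c')[s\phi]\in\{0,1\}$ directly by the inductive hypothesis. If instead $\phi=\beta$, then $s\phi = -\beta$ and I would combine the two inputs above to get $\overline{F}(c)[\beta] = 1 + \overline{F}(c')[-\beta] = 1 - \overline{F}(c')[\beta]$, which lies in $\{0,1\}$ because $\overline{F}(c')[\beta]\in\{0,1\}$ by the inductive hypothesis. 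This closes the induction.

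The main obstacle is precisely the case $\phi=\beta$: here $s\phi$ is a \emph{negative} root, so the inductive hypothesis cannot be applied to it directly, and the raw contribution $\overline{F}(c')[-\beta]$ could a priori be $-1$. Everything is rescued by the antisymmetry identity, which rewrites this negative-root value as $-\overline{F}(c')[\beta]$ and thereby converts a would-be $\{-1,0\}$ term into the complementary $\{0,1\}$ value needed. The only external ingredient is the standard lemma on how a simple reflection acts on the positive roots; the remainder is bookkeeping with the cocycle identity and the permutation action on $\mathbb{Z}^{\Phi}$.
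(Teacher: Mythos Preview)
Your proposal is correct and follows essentially the same route as the paper: induct on Coxeter length, peel off a simple reflection on the left, use the cocycle identity to reduce to the two cases $\phi\neq\beta$ and $\phi=\beta$, invoke the standard fact that $s_\beta$ permutes $\Phi^+\setminus\{\beta\}$ in the former case, and use the antisymmetry $\overline{F}(c')[-\beta]=-\overline{F}(c')[\beta]$ in the latter. Your write-up is in fact more explicit than the paper's about \emph{why} the antisymmetry holds (from the very definition $F=\ell-\overline{\ell}$), which the paper uses without comment in the step $1+\overline{F}(c)[-\phi]=1-\overline{F}(c)[\phi]$.
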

\begin{proof}
If $c = e$, then the statement is trivial.
Hence, suppose it holds for $c$; we shall prove it for $\vartheta c$, where $\vartheta$ is a Coxeter generator of $\mathcal{C}$ (that is, the image of an Artin generator of $\mathcal{A}$).
To this end, we find, for any $\phi\in\Phi$, that $\overline{F}(\vartheta c)[\phi] = \overline{F}(\vartheta)[\phi] + (\vartheta\cdot\overline{F}(c))[\phi] = \overline{F}(\vartheta)[\phi] + \overline{F}(c)[\vartheta\star\phi]$, where $\vartheta\star\phi$ is the image of $\phi$ under the reflection across the hyperplane orthogonal to $\alpha_{\vartheta}$.
If $\alpha_{\vartheta}\neq\phi$, then the first term vanishes, and since $\vartheta\star\phi$ also belongs to $\Phi^+$, the conclusion follows.
If $\alpha_{\vartheta} = \phi$, then we have $\overline{F}(\vartheta c)[\phi] = 1 + \overline{F}(c)[-\phi] = 1 - \overline{F}(c)[\phi]\in\{0, 1\}$, and the conclusion again follows.
By induction on the length of $c$, we are done.
\end{proof}

Since this cocycle exists, the image of the super-Gauss-Epple homomorphism is not a semidirect product (as there is no way to embed the Coxeter group as a subgroup).
However, we can show that the image of $\mathcal{P}$ under the super-Gauss-Epple homomorphism is abelian.
Note that this is \textit{not} a semidirect product, for reasons analogous to those given at the end of Section \ref{section-braids-1cocy} (namely, that no element of $\mathcal{A}$ whose image in $\mathcal{C}$ is a canonical generator can have a square whose image under super-Gauss-Epple is that of the identity).

We can also characterize the image of the super-Gauss-Epple homomorphism the following way:
Note that, if $c\in\mathcal{A}$ and $a$ is a canonical generator of $\mathcal{A}$, then $SGE(ca^2c^{-1}) = (v_c, \mathcal{C}(c))(\Phi_{\Delta_a} + \Phi_{-\Delta_a}, e)(-\mathcal{C}(c)^{-1}v_c, W(c)) = (\Phi_{\mathcal{C}(c)\cdot\Delta_a} + \Phi_{\mathcal{C}(c)\cdot\Delta_a}, e)$.
Hence, the set of all pairs $(v, c)\in\mathbb{Z}^{\Phi}\rtimes C$ that belong to the image of SGE are the pairs such that $v - \overline{v} = \overline{F}(c)$.

\section{Complex reflection groups}\label{sec-complex}

We considered a candidate analogue of a ``symmetric super-Gauss-Epple homomorphism'', which would have type signature $B \to \mathbb{Z}^{\mathcal{A}}\rtimes W$: namely, a homomorphism given by generating relations $s\to (e_{H_s}, W(s))$, where $H_s$ is a distinguished generator of $B$ whose corresponding element in $W$ is a member of $\Psi$.
For example, in the case of $\mathcal{C} = G(n, 1, 1) = \mathbb{Z}/n\mathbb{Z}$, we have $\mathcal{A} = \mathbb{Z}$, and $\mathcal{HA}$ consists of a single element $\mathcal{H}$.
Therefore, the associated homomorphism is an injective homomorphism of type $\mathbb{Z}\to\mathbb{Z}/n\mathbb{Z}\times\mathbb{Z}$ given by the generating relation $1\to (1, 1)$.
It is trivial to verify that this is indeed a homomorphism.

However, using a geometric argument, we show that no such homomorphism exists, unless $B$ is an Artin group of finite type:

\begin{theorem}[No SGE for complex reflection groups]
Let $B$ be some braid group associated to a complex reflection group, $W$ be the associated complex reflection group, and $\mathcal{HA}$ the associated hyperplane arrangement.
Suppose that, in the canonical Artin-like presentation of $B$,
there is a relation of the form $aba = bab$, where $a$ and $b$ are two distinct generators.
Then there is no homomorphism $B \to \mathbb{Z}^{\mathcal{A}}\rtimes W$ given by generating relations $s\to (e_{H_s}, W(s))$, where $H_s$ is a distinguished generator of $B$ whose corresponding element in $W$ is a member of $\Psi$.
\end{theorem}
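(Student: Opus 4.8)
The plan is to exploit the fact that a homomorphism must respect the single relation $aba = bab$, and to distill that requirement into one clean geometric equation. Write $u := W(a)$ and $v := W(b)$ for the images in $W$, and let $H_a, H_b$ be their reflecting hyperplanes; recall that for $g\in W$ and a reflection $r$ with hyperplane $H_r$ we have $g\cdot H_r = H_{grg^{-1}}$. First I would expand both sides in $\mathbb{Z}^{\mathcal{A}}\rtimes W$ using the product rule $(\ell, w)(\ell', w') = (\ell + w\cdot\ell', ww')$, giving
$$SGE(aba) = (e_{H_a} + e_{u\cdot H_b} + e_{uv\cdot H_a},\, uvu), \qquad SGE(bab) = (e_{H_b} + e_{v\cdot H_a} + e_{vu\cdot H_b},\, vuv).$$
The $W$-components agree automatically, since $B\to W$ is a genuine homomorphism, so the relation is respected precisely when the two formal sums of hyperplanes coincide as multisets.

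Second, I would simplify the two outer terms by a purely group-theoretic computation that uses only $uvu = vuv$ (and not $u^2 = 1$): from the braid relation one obtains $(uv)u(uv)^{-1} = v$ and $(vu)v(vu)^{-1} = u$, hence $uv\cdot H_a = H_b$ and $vu\cdot H_b = H_a$. After substituting these, both multisets contain $\{H_a, H_b\}$, and cancelling them collapses the entire homomorphism condition to the lone equation $u\cdot H_b = v\cdot H_a$, that is, $H_{uvu^{-1}} = H_{vuv^{-1}}$.

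Third, I would analyze this equation through the structure theory of complex reflection groups. The braid relation also gives $u = (vu)v(vu)^{-1}$, so $u$ and $v$ are conjugate; in particular they share an order and a common non-trivial eigenvalue $\zeta$, and so do their conjugates $uvu^{-1}\sim v$ and $vuv^{-1}\sim u$. Suppose now $H_{uvu^{-1}} = H_{vuv^{-1}} =: H$. The pointwise stabilizer of $H$ in $W$ is cyclic (Steinberg), and within a cyclic group of reflections distinct non-identity elements carry distinct non-trivial eigenvalues; since $uvu^{-1}$ and $vuv^{-1}$ both have eigenvalue $\zeta$, they must coincide. A short manipulation of $uvu = vuv$ shows $uvu^{-1} = vuv^{-1}$ if and only if $u^2 = v^2$. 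But $u^2$ fixes $H_a$ pointwise and $v^2$ fixes $H_b$ pointwise, so $u^2 = v^2$ fixes the two distinct hyperplanes $H_a\neq H_b$, hence fixes $V = H_a + H_b$ and equals the identity; thus $u^2 = v^2 = 1$ and both reflections are involutions. Contrapositively, once $u$ and $v$ have order $>2$ — exactly the genuinely complex, non–finite-type situation — the equation $u\cdot H_b = v\cdot H_a$ fails, the multisets differ, and no such homomorphism exists.

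The main obstacle is this third step: converting the hyperplane identity $u\cdot H_b = v\cdot H_a$ back into the group identity $uvu^{-1} = vuv^{-1}$, which is precisely where order-$>2$ reflections diverge from involutions. In the real case $u = u^{-1}$ makes $uvu^{-1} = uvu = vuv = vuv^{-1}$ hold automatically, so the condition is satisfied and $SGE$ descends — consistent with the finite-type theorem proved earlier; the delicate point is therefore to isolate the feature (nontrivial-but-cyclic pointwise stabilizers, together with conjugacy forcing equal eigenvalues) that destroys this coincidence as soon as the order exceeds $2$. I would finally sanity-check the dichotomy against the worked example $G(n,1,1)$, where there is no length-$3$ relation and the homomorphism does exist, confirming that the hypothesis is sharp.
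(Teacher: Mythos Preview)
Your proposal is correct and follows essentially the same route as the paper: expand both sides in $\mathbb{Z}^{\mathcal{A}}\rtimes W$, use the braid relation to identify the outer hyperplane terms $uv\cdot H_a = H_b$ and $vu\cdot H_b = H_a$, reduce everything to the single equation $u\cdot H_b = v\cdot H_a$, and then show that this forces $u^2 = v^2$, hence order~$2$. The only cosmetic differences are that the paper obtains the common eigenvalue $\zeta$ via a determinant argument rather than conjugacy, and that it bypasses Steinberg entirely---two pseudo-reflections with the same hyperplane and the same nontrivial eigenvalue are equal by definition, so the cyclic-stabilizer fact is not needed.
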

\begin{proof}
We prove $(e_{H_a}, W(a))(e_{H_b}, W(b))(e_{H_a}, W(a)) \neq (e_{H_b}, W(b))(e_{H_a}, W(a))(e_{H_b}, W(b))$.

We can easily compute that the former part of the left hand side  will be $e_{H_a} + e_{W(a)H_b} + e_{W(a)W(b)e_{H_a}}$,
and the former part of the right hand side would be $e_{H_b} + e_{W(b)H_a} + e_{W(b)W(a)e_{H_b}}$.
We claim that $H_a = W(b)W(a)H_b, H_b = W(a)W(b)H_a$, but that $W(a)H_b \neq W(b)H_a$.

Firstly, however, we shall need to use the determinant trick.
We have $W(a)W(b)W(a) = W(b)W(a)W(b)$.
Since both sides are linear transformations, we can take determinants: $\det(W(a))\det(W(b))\det(W(a)) = \det(W(b))\det(W(a))\det(W(b))$.
Since these determinants are nonzero real numbers, we can cancel to obtain $\det(W(a)) = \det(W(b))$.
This equality is important because letting $\zeta_{W(a)}$ be the multiplier of $W(a)$ and similarly for $\zeta_{W(b)}$, we can conclude that $\zeta_{W(a)} = \zeta_{W(b)}$, and will refer to both constants as merely $\zeta$.

Now, to show $H_a = W(b)W(a)H_b$, let $s_{\zeta, H}$ be the pseudo-reflection that multiplies by $\zeta$ and preserves $H$. 
We compute
\begin{align*}
    s_{W(b)W(a)H_b, \zeta} &= W(b)W(a)s_{H_b, \zeta}(W(b)W(a))^{-1}\\
    &= W(b)W(a)W(b)(W(b)W(a))^{-1}\\
    &= W(a)W(b)W(a)(W(b)W(a))^{-1}\\
    &= W(a)\\
    &= s_{H_a, \zeta},
\end{align*}
which gives the claim.
By symmetry, we can also obtain $H_b = W(a)W(b)H_a$.

Now to show that $W(a)H_b \neq W(b)H_a$, we compute

\begin{align*}
    s_{W(a)H_b, \zeta} &= W(a)s_{H_b, \zeta}W(a)^{-1}\\
    &= W(a)W(b)W(a)^{-1}\\
    &= (W(a)W(b)W(a))W(a)^{-2},
\end{align*}
and similarly $s_{W(b)H_a, \zeta} = (W(b)W(a)W(b))W(b)^{-2} = (W(a)W(b)W(a))W(b)^{-2}$.
Observe now that $W(a)^2 = s_{H_a, \zeta^2}$ and $W(b)^2 = s_{H_b, \zeta^2}$. 
Since $a$ and $b$ are distinct, this means that $H_a$ and $H_b$ are distinct.
This in turn implies that $s_{H_a, \zeta^2}$ and $s_{H_b, \zeta^2}$ are distinct unless $\zeta^2 = 1$.
However, if $\zeta^2 = 1$, then $\zeta = -1$, so that $a$ and $b$ are merely real reflections, as desired.
\end{proof}

\bibliographystyle{plain}
\bibliography{arxiv-paper}

\begin{thebibliography}{10}

\bibitem{alexopoulos1997convolution}
Georgios Alexopoulos.
\newblock {Convolution powers on discrete groups of polynomial volume growth}.
\newblock In {\em Canad. Math. Soc. Conf. Proc. 21}, pages 31--57, 1997.

\bibitem{10.2307/1969218}
E.~Artin.
\newblock {Theory of Braids}.
\newblock {\em Annals of Mathematics}, 48(1):101--126, 1947.

\bibitem{Artin1925}
Emil Artin.
\newblock {Theorie der Z{\"o}pfe}.
\newblock {\em Abhandlungen aus dem Mathematischen Seminar der Universit{\"a}t
  Hamburg}, 4(1):47--72, 1925.

\bibitem{nasmith2020medium}
{Ben Nasmith}.
\newblock {All About Root Systems}.
\newblock
  \url{https://www.cantorsparadise.com/all-about-root-systems-20c651b19d25},
  2020.

\bibitem{Broue2000}
Michel Brou{\'e}.
\newblock {Reflection Groups, Braid Groups, Hecke Algebras, Finite Reductive
  Groups}.
\newblock In {\em Current Developments in Mathematics}, pages 1--107, 2000.

\bibitem{Broue2010IntroductionTC}
Michel Brou{\'e}.
\newblock {\em Introduction to Complex Reflection Groups and Their Braid
  Groups}.
\newblock Springer, 2010.

\bibitem{Epple98}
Moritz Epple.
\newblock {Orbits of Asteroids, a Braid, and the First Link Invariant}.
\newblock {\em Math. Intell.}, 1998.

\bibitem{gonzalez2011basic}
Juan Gonz{\'a}lez-Meneses.
\newblock Basic results on braid groups.
\newblock In {\em Annales Math{\'e}matiques Blaise Pascal}, volume~18, pages
  15--59, 2011.

\bibitem{dynkindiagrams2007commons}
{Kilom691}.
\newblock {Finite\_Dynkin\_diagrams.svg}.
\newblock \url{https://commons.wikimedia.org/wiki/File:DynkinDiagrams.svg},
  2007.

\bibitem{krammer2002braid}
Daan Krammer.
\newblock Braid groups are linear.
\newblock {\em Annals of Mathematics}, pages 131--156, 2002.

\bibitem{mqt200207notes}
{Minh-T{\^a}m Trinh}.
\newblock {\url{https://math.mit.edu/~mqt/math/research/2002_07.pdf}}.

\bibitem{RWIGGreview}
L.~Saloff-Coste.
\newblock {Random Walks On Infinite Graphs And Groups (Cambridge Tracts in
  Mathematics 138). By Wolfgang Woess}.
\newblock \url{https://www.math.tugraz.at/~woess/BLMSrev.pdf}.

\bibitem{AMSNotice200109}
Laurent Saloff-Coste.
\newblock {Probability on Groups: Random Walks and Invariant Diffusions}.
\newblock {\em Not. Am. Math. Soc.}, 2001.

\bibitem{Suciu16}
Alex Suciu.
\newblock {The Pure Braid Groups and Their Relatives}.
\newblock \url{https://web.northeastern.edu/suciu/slides/Caen-Jan2016.pdf}.

\bibitem{WMW-WhiteheadLink}
Eric~W. Weisstein.
\newblock {Whitehead Link}.
\newblock \url{https://mathworld.wolfram.com/WhiteheadLink.html}.
\newblock From MathWorld--A Wolfram Web Resource.

\end{thebibliography}

\end{document}